\newcommand{\dis}{\displaystyle}
\newcommand{\lb}{\lambda}
\newcommand{\p}{\partial}
\newcommand{\f}{\frac}
\newcommand{\beq}{\begin{equation}}
\newcommand{\eeq}{\end{equation}}
\renewcommand{\leq}{\leqslant}
\renewcommand{\geq}{\geqslant}
\newcommand{\diff}{\, \mathrm{d} }
\newcommand{\R}{\mathbb{R}}
\newcommand{\Z}{\mathbb{Z}}
\newcommand{\Leb}{\mathrm{L}}
\newcommand{\ddt}{\frac{\p}{\p t}}
\newcommand{\ddx}{\frac{\p}{\p x}}
\newcommand{\dda}{\frac{\p}{\p a}}
\newcommand{\supp}{\mathrm{supp \,}}
\newcommand{\dmz}{\diff \mu (z)}
\newcommand{\e}{\mathrm e}
\DeclareRobustCommand\iff{\;\Longleftrightarrow\;}
\newtheorem{theorem}{Theorem}
\newtheorem{prop}[theorem]{Proposition}
\newtheorem{lem}[theorem]{Lemma}
\title{Steady distribution of the incremental model for bacteria proliferation}
\author{Pierre Gabriel \thanks{Laboratoire de Math\'ematiques de Versailles, UVSQ, CNRS, Universit\'e Paris-Saclay,  45 Avenue des \'Etats-Unis, 78035 Versailles cedex, France. Email: pierre.gabriel@uvsq.fr}
\and Hugo Martin \thanks{Laboratoire Jacques-Louis Lions, CNRS UMR 7598, Sorbonne universit\'e, 4 place Jussieu, 75005 Paris, France. Corresponding author, Email: martin@ljll.math.upmc.fr}}
\begin{document}

\maketitle

\abstract{We study the mathematical properties of a model of cell division structured by two variables -- the size and the size increment -- in the case of a linear growth rate and a self-similar fragmentation kernel. We first show that one can construct a solution to the related two dimensional eigenproblem associated to the eigenvalue $1$ from a solution of a certain one dimensional fixed point problem. Then we prove the existence and uniqueness of this fixed point in the appropriate $\Leb^1$ weighted space under general hypotheses on the division rate. Knowing such an eigenfunction proves useful as a first step in studying the long time asymptotic behaviour of the Cauchy problem.}

\

\noindent{\bf Keywords} Structured populations, cell division, transport equation, eigenproblem, long-time asmptotics, integral equation

\

\noindent{\bf AMS Class. No.} Primary: 35Q92, 35P05, 45K05, 45P05, 92D25; Secondary: 35A22, 35B40, 35B65

\
\section*{Introduction}

In structured population dynamics, finding the structuring variable(s) which best describes a phenomenon is a crucial question. For a population of proliferating cells or bacteria the variables usually considered are age, size (see \cite{Webb,MetzDiekmann,PerthameTransport}) or a combination of both (see \cite{BellAnderson, SinkoStreifer, Hall1991} for modeling and \cite{Webb85,MetzDiekmann,Hall1991,Doumic2007} for mathematical analysis). Recent experimental work highlighted the limits of these models to describe bacteria, and a new variable to trigger division emerged: the \emph{size-increment}, namely the size gained since the birth of the cell (see \cite{Sauls} and references therein for a review of the genesis of the related model). This so called ‘adder principle’ ensures homeostasis with no feedback from the bacteria and explains many experimental data. In this model, bacteria are described by two parameters: their size-increment and their size, respectively denoted by $a$ and $x$ in the following (the choice of letter $a$ is reminiscent from the age variable, since as for the age, the size increment is reset to zero after division). This choice of variables is motivated by the main assumption of the model, which is that the control of the cellular reproduction is provided by the division rate $B$ which is supposed to depend only on $a,$ and the growth rate $g$ which is assumed to depend only on $x.$ With the variables we introduced, the model formulated in \cite{T-A} reads
\[
\left\{
\begin{array}{l l}
\p_t n(t,a,x)+\p_a (g(x)n(t,a,x))+\p_x (g(x)n(t,a,x))+B(a)g(x)n(t,a,x)=0,\quad t\geq0,\ x>a>0,
\vspace{2mm}\\
\dis g(x)n(t,0,x)=4g(2x)\int_0^\infty B(a)n(t,a,2x)\diff a,\qquad t\geq0,\ x>0.
\end{array}
\right.
\]
The function $n(t,a,x)$ represents the number of cells at time $t$ of size $x$ that have grown of an increment $a$ since their birth. The boundary term denotes an equal mitosis, meaning that after division, a mother cell gives birth to two daughters of equal size. However, if this special case of equal mitosis is appropriate to describe the division of some bacterium (\emph{e.g.} E. Coli), it is inadequate for asymetric division (like yeast for instance) or for a fragmentation involving more than two daughters (as in the original model formulated for plant growth in \cite{Hall1991}). In the current paper, we propose to consider more general division kernels. We assume that when a cell of size $x$ divides, it gives birth to a daughter of size $zx$ with a certain probability which depends on $z\in (0,1)$ but is independent of $x.$ Such fragmentation process is usually called self-similar. More precisely the number of daughters with a size between $zx$ and $(z+\!\diff z)x$ is given by $\mu([z,z+\!\diff z]),$ where $\mu$ is a positive measure on $[0,1].$ The model we consider is then formulated as
\begin{subequations}\label{eq:incrementalfrag}
  \begin{empheq}[left=\empheqlbrace]{align}
&\p_t n(t,a,x)+\p_a (g(x)n(t,a,x))+\p_x (g(x)n(t,a,x))+B(a)g(x)n(t,a,x)=0,\quad t\geq0,\ x>a>0, \label{equationnfrag}
\vspace{2mm}\\
&\dis g(x)n(t,0,x)=\int_0^1 g(\frac{x}{z})\int_0^\infty B(a)n(t,a,\frac{x}{z})\diff a\ \frac{\dmz}{z},\qquad t\geq0,\ x>0. \label{bordnaissancenfrag}
\end{empheq}
\end{subequations}
It appears that this model is a particular case of the one proposed in the pioneer work \cite{Hall1991} for plants growing in a single dimension, mixing age and size control. Indeed, in this paper the authors noticed that in the case of a deterministic and positive growth rate, a size/age model is equivalent to a size/birth-size through the relation $a=x-s,$ where $s$ denotes the birth-size (see Figure~\ref{fig:sxy}). They preferred working with the size/birth-size description since in this framework the transport term acts only in the $x$ direction.
In the case when $g$ is independent of $x$ and $B$ is bounded from above and below by positive constants,
it is proved in~\cite{Webb85} for $\mu$ a uniform measure on $[0,1],$ and in~\cite[Chapter V]{MetzDiekmann} for the equal mitosis, that the solutions to Equation~\eqref{eq:incrementalfrag} converge to a stable distribution as time goes to infinity.
In the present paper we propose to study the model~\eqref{eq:incrementalfrag} in the case of a linear growth rate (see \cite{BellAnderson} for a discussion on this hypothesis).
More precisely we are interested in populations which evolve with a stable size and size-increment distribution, {\it i.e.} solutions of the form $n(t,a,x)=h(t)N(a,x).$
The existence of such separable solutions when $g$ is linear was already the topic of~\cite{Hall1991}, but their proof required the equation to be set on a bounded domain and they had to impose {\it a priori} the existence of a maximal size for the population.
In our case no maximal size is prescribed and it brings additional difficulties due to a lack of compactness. To address this problem, we will make the following assumptions.
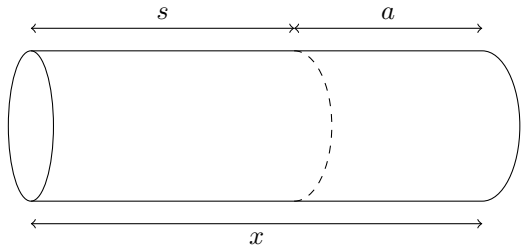
\begin{figure} 
\centering

\begin{tikzpicture}
\draw (0,0) ellipse (.3 and 1);
\draw (0,-1) -- (6,-1)  arc (270:450:0.5 and 1) -- (6,1) -- (0,1);
\draw[dashed] (3.5,-1) arc (270:450:0.5 and 1) (3.5,1);

\draw [<->] (0,-1.3) -- (6,-1.3) node[midway,below] {$x$};
\draw [<->] (0,1.3) -- (3.5,1.3) node[midway, above] {$s$};
\draw [<->] (3.5,1.3) -- (6,1.3) node[midway, above] {$a$};

\end{tikzpicture}
\caption{schematic representation of the variables on an \emph{E. coli} bacterium.\label{fig:sxy}}
\end{figure}

\
First, we want the sum of the daughters’ sizes to be equal to the size of the mother. This rule, called mass conservation, prescribes
\beq \label{massconservation}
\int_0^1 z\dmz=1.
\eeq
We also assume that the division does not produce any arbitrarily small daughter by imposing that the support of $\mu$ is a compact subset of $(0,1),$ which ensures that
\beq \label{probameasure}
\theta:=\inf\supp\mu>0\qquad\text{and}\qquad\exists\eta\in(\theta,1),\ \supp \mu\subset[\theta,\eta].
\eeq
In particular, these assumptions imply that the mean number of daughters $\mu([0,1])$ is finite.
The division rate $B$ is assumed to be a nonnegative and locally integrable function on $\R_+$ such that
\beq \label{supportB}
\exists\, b\geq 0,\quad \supp B = [b,\infty),
\eeq
see~\cite{DG} for instance.
It will be useful in our study to define the associated \emph{survivor function} $\Psi$ by \[
\Psi(a)=\e^{-\int_0^a B(z)\,\diff z}.
\]
For a given increment $a$, $\Psi(a)$ represents the probability that a cell did not divide before having grown at least of $a$ since its birth.
We assume that the function $B$ is chosen in such a way that $\Psi$ tends to zero at infinity,
meaning that all the cells divide at some time.
More precisely we make the following quantitative assumption
\beq \label{decayPsi}
\exists\, k_0> 0, \quad \Psi(a) \underset{+\infty}{=}\mathcal{O}(a^{-k_0}).
\eeq
This assumption on the decay at infinity of the survivor function enables a wide variety of division rates. For instance, it is satisfied if there exists $A>0$ such that
\[\forall\, a \geq A,\quad B(a)\geq \frac{k_0}{a}.\]
The function $B$ being locally integrable, the function $\Psi$ belongs to $W_{loc}^{1,1}(\R_+)$ and \eqref{decayPsi} ensures that its derivative belongs to $\Leb^1(\R_+).$ We can introduce the useful function $\Phi$ defined by
\beq \label{Phi}
\Phi=B\Psi=-\Psi'
\eeq
which is the probability distribution that a cell divides at increment $a.$
Recall that, as in \cite{Hall1991}, we consider the special case of a linear growth rate, namely $g(x)=x.$
In this case, multiplying by the size $x$ and integrating, we obtain
$\f{\!\diff}{\!\diff t} \iint xn(t,a,x)\diff a \diff x = \iint xn(t,a,x)\diff a \diff x,$
and so
\beq \label{conservationlaw}
\iint xn(t,a,x)\diff a \diff x = \e^t\iint xn^0(a,x)\diff a \diff x.
\eeq
This implies that if we look for a solution with separated variables $n(t,a,x)=h(t)N(a,x)$, necessarily $h(t)=h(0)\e^t$. In other words, the Malthus parameter of the population is $1$. This motivates the Perron problem which consists in finding $N=N(a,x)$ solution to
\begin{subequations}
  \begin{empheq}[left=\empheqlbrace]{align}
&\partial_a(xN(a,x)) + \partial_x(xN(a,x))+(1 +xB(a))N(a,x)=0,\qquad x>a>0, \label{equationNfrag} \vspace{2mm}\\
&\dis N(0,x)=\int_0^1 \int_0^{\infty} B(a)N(a,\frac{x}{z})\diff a\ \frac{\dmz}{z^2}, \qquad x>0, \label{bordnaissanceNfrag} \vspace{2mm}\\
& N(a,x)\geq 0, \qquad x\geq a\geq 0,\label{positiviteNfrag} \vspace{2mm}\\
& \int_0^\infty \int_0^x N(a,x) \diff a \diff x=1. \label{normalisationNfrag}
  \end{empheq}
\end{subequations}
It is convenient to define the set $X := \{(a,x)\in\R^2,\ 0\leq a \leq x \},$ and we are now ready to state the main result of the paper.

\begin{theorem}\label{mainthm}
Let $\mu$ be a positive measure on $[0,1]$ satisfying \eqref{massconservation} and \eqref{probameasure}, and $B$ be a nonnegative and locally integrable function on $\R_+$ satisfying \eqref{supportB} such that the associated survivor function $\Psi$ satisfies \eqref{decayPsi}. Then, there exists a unique solution  $N \in \Leb^1(X,(1+(x-a)^2)\diff a \diff x)$ to the eigenproblem \eqref{equationNfrag}--\eqref{normalisationNfrag}.
This solution is expressed as
\beq \label{formethm}
N : (a,x)\in X \mapsto \f{\Psi(a)}{x^2} f(x-a)\eeq
where $f$ is a nonnegative function which satisfies
\[f\in \Leb^1(\R_+, x^l\diff x)\]
for all $l< k_0$, $k_0$ being the positive number given in hypothesis \eqref{decayPsi}, and
\[
\supp f = [b_\theta,\infty)
\]
with $b_\theta=\frac{\theta}{1-\theta}b,$ where $\theta$ and $b$ are defined in \eqref{probameasure} and \eqref{supportB} respectively.
\end{theorem}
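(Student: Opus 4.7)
My plan is to split the proof according to the decomposition suggested by \eqref{formethm}: first reduce the two-dimensional eigenproblem to a scalar linear integral equation for the profile $f$, then establish existence, uniqueness, and the structural properties of $f$. The reduction is algebraic; the real work lies in the fixed-point step, which must handle a lack of compactness on $\R_+$.

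For the reduction, substitute $N(a,x)=\Psi(a)f(x-a)/x^2$ into \eqref{equationNfrag}. Expanding $\p_a(xN)+\p_x(xN)=x(\p_a+\p_x)N+N$ and using $\Psi'=-B\Psi$ together with the fact that $(\p_a+\p_x)f(x-a)=0$, every term cancels and the transport equation is satisfied automatically. Evaluating the ansatz at $a=0$ (where $\Psi(0)=1$) and changing variables $y=x/z-a$ inside the inner integral of \eqref{bordnaissanceNfrag} yields the scalar fixed-point equation
\begin{equation}\label{fxpt}
f(x)=(\mathcal{T}f)(x):=\int_{[\theta,\eta]}\!\!\int_0^{x/z}\!\Phi\!\left(\tfrac{x}{z}-y\right)f(y)\diff y\,\dmz.
\end{equation}
Everything then reduces to finding a unique nonnegative fixed point of $\mathcal{T}$ in the appropriate weighted $\Leb^1$ space with the prescribed support.

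The essential a priori tool for $\mathcal{T}$ is a moment identity: testing \eqref{fxpt} against $x^l$, using Fubini, and the substitution $x=zy$ in the outer integral gives
\[\int_0^\infty x^l\mathcal{T}f(x)\diff x = \Big(\int_{[\theta,\eta]}\!z^{l+1}\dmz\Big)\int_0^\infty\!\!\int_0^\infty\!(u+y)^l\Phi(u)f(y)\diff u\diff y.\]
Mass-conservation \eqref{massconservation} gives $\int z\dmz=1$, consistent with eigenvalue $1$ at $l=0$, while the strict inclusion $\supp\mu\subset(0,1)$ forces $\int z^{l+1}\dmz<1$ for $l>0$. A binomial expansion of $(u+y)^l$ and induction on $l$ then produce finite moments $M_l(f)<\infty$ provided the corresponding moments $M_l(\Phi)=\int u^l\Phi(u)\diff u=l\int u^{l-1}\Psi(u)\diff u$ are finite, which by \eqref{decayPsi} happens exactly for $l<k_0$. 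To actually produce the fixed point, I would truncate \eqref{fxpt} to $x\in[0,R]$ where the resulting operator $\mathcal{T}_R$ is compact on $\Leb^1([0,R])$, apply Krein-Rutman to extract a positive principal eigenpair $(\lambda_R,f_R)$ with $\|f_R\|_1=1$, argue via the moment identity that $\lambda_R\to 1$, and pass to the limit $R\to\infty$ using the uniform moment bounds as tightness. Uniqueness then follows from the irreducibility of $\mathcal{T}$ (coming from $\supp\Phi=[b,\infty)$ and the compact support of $\mu$) together with a dual eigenfunction argument.

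The support and integrability statements are essentially consistency checks. If $\supp f\subset[c,\infty)$ for some minimal $c$, the right-hand side of \eqref{fxpt} is zero unless $y\geq c$ and $x/z-y\geq b$, forcing $x\geq\theta(c+b)$; self-consistency $c=\theta(c+b)$ fixes $c=b_\theta=\theta b/(1-\theta)$, and positivity of $f$ on all of $[b_\theta,\infty)$ follows from iterating $\mathcal{T}$ on any nonzero positive data. Normalisation \eqref{normalisationNfrag} is a scalar rescaling. The weighted integrability $N\in\Leb^1(X,(1+(x-a)^2)\diff a\diff x)$ reduces via $u=x-a$ to $\int f(u)(1+u^2)\int_0^\infty\Psi(a)/(a+u)^2\diff a\diff u$, with the inner integral bounded by $1/u$ uniformly using $\Psi\leq 1$, so the conclusion follows from the moment bounds on $f$. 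The main obstacle is the existence step: the dilation $x\mapsto x/z$ and the unboundedness of the domain block both a contraction mapping argument and a direct Krein-Rutman application on $\R_+$, so the whole weight of the proof lies in making the truncation-plus-tightness procedure airtight, with the decay hypothesis \eqref{decayPsi} providing the essential moment control.
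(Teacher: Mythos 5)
Your overall route --- substitute the ansatz $N(a,x)=\Psi(a)f(x-a)/x^2$, verify the transport equation algebraically, read off the scalar fixed-point equation \eqref{eq:defTfrag} from the boundary condition at $a=0$, truncate to a compact interval, apply a Krein--Rutman-type theorem, and pass to the limit --- tracks the paper's proof closely (the paper phrases the reduction through the birth-size variable $s=x-a$, which is cosmetic). One step genuinely diverges and two need to be tightened.

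The divergent step is uniqueness. The paper does not use duality: it shows that two normalized fixed points $f,f_1$ satisfy $\mathcal L[f-f_1](y)=\int_\theta^\eta\mathcal L[f-f_1](zy)\,\mathcal L[\Phi](zy)\,z\,\dmz$, bounds the right-hand side by the running supremum $\overline{\mathrm L}(\eta y)$ of $\mathcal L[f-f_1]$, iterates to get $\overline{\mathrm L}(y)\le\overline{\mathrm L}(\eta^j y)\to 0$, and concludes by Lerch's theorem. Your ``irreducibility plus dual eigenfunction'' is a plausible alternative, but as written it is not yet an argument: you must name the dual eigenfunction, which here is the constant $\phi\equiv 1$ (one checks $T^*1=1$ directly from \eqref{massconservation}); then $\langle T|f-f_1|-|f-f_1|,1\rangle=0$ with positivity gives $T|f-f_1|=|f-f_1|$, after which irreducibility of $T$ on $(b_\theta,\infty)$ forces a sign for $f-f_1$, and normalization gives $f=f_1$. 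Either route is legitimate; the Laplace one is lighter because it dispenses with carrying irreducibility over to the unbounded domain.

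Two places in your existence step need tightening. First, truncating to $[0,R]$ destroys irreducibility: since $\supp\Phi=[b,\infty)$, the truncated operator vanishes identically on $[0,\theta b)$, so $\{f:\supp f\subset[\theta b,R]\}$ is a nontrivial closed invariant ideal and Theorem~\ref{thmKRdP} (Krein--Rutman strengthened by De Pagter) no longer delivers $\rho_R>0$. The paper truncates to $[b_\theta,\Sigma]$ precisely so that Lemma~\ref{irredTS} applies. Second, ``uniform moment bounds as tightness'' handles only mass escape at infinity; the Riesz--Fr\'echet--Kolmogorov criterion (Theorem~\ref{compactBrezis}) also requires $L^1$-equicontinuity of translates, uniformly in the truncation parameter, which the paper obtains from translation continuity of $\Phi$ in $L^1$ and which is where most of the computational effort in Lemma~\ref{compactTSigma} and Lemma~\ref{extraction} lies. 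Both points are fixable, but they are where the real weight of the proof sits.
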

The fast decay of the function $f$ near zero is a consequence of the form of the support of the fragmentation kernel $\mu.$ Furthermore, this decay is consistent with the decay near zero of the eigenvector for the size equation (see \cite{DG}).
Remark that for any nonnegative and appropriately normalized function $f\in \Leb^1(\R_+),$ the expression given in \eqref{formethm} satisfies \eqref{equationNfrag}, \eqref{positiviteNfrag}, and \eqref{normalisationNfrag}. The proof of Theorem~\ref{mainthm} consists in finding the appropriate function $f$ such that~\eqref{bordnaissanceNfrag} is also satisfied.
This function is obtained as the fixed point of a conservative operator, and this allows us to compute it numerically by using the power iteration (see \cite{BurdenDouglas}). We obtain the function on the left on Figure~\ref{fig:simuN}. On the right is the related density $N(a,x).$

Notice also that for the function $N$ given by \eqref{formethm}, the function $s \mapsto N(a+s,x+s)$ is continuous for any $a\leq x$. It corresponds to the trajectories along the characteristics. 
\begin{figure} 
\begin{center}
\includegraphics[scale=0.34]{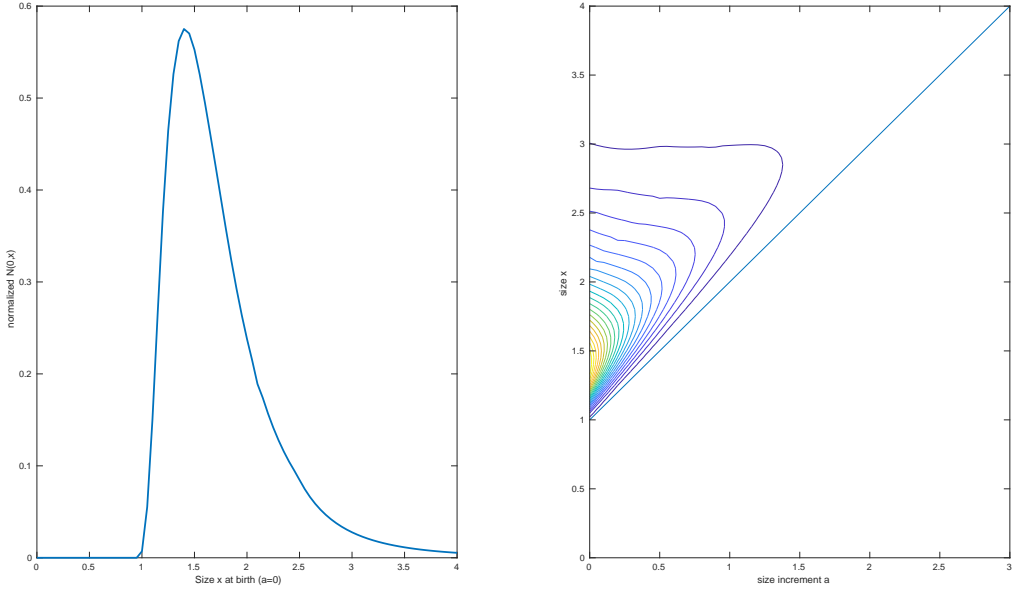}
\caption{Left: simulation of the function $f$ by the power method with $B(a)=\f{2}{1+a}\mathds{1}_{\{1\leq a\}}$ and $\mu(z)=2\delta_{\frac{1}{2}}(z).$ Right: level set of the density $N(a,x)$ obtained from this function $f.$ Straight line: the set $\{x=a+1\}.$ \label{fig:simuN}}
\end{center}
\end{figure}

\medskip

The article is organised as follows. In Section~\ref{sec:reduction} we reduce the Perron eigenvalue problem with two variables to a fixed point problem for an integral operator in dimension one. Section~\ref{sec:existence} is dedicated to proving the existence and uniqueness of the fixed point by using functional analysis and Laplace transform methods. In Section~\ref{sec:entropy} we go through the usefulness of knowing $N$ to develop entropy methods. Finally in Section~\ref{sec:conclusion} we discuss some interesting perspectives.

\section{Transformation into an integral equation}\label{sec:reduction}

Our study consists in constructing a solution to the eigenproblem \eqref{equationNfrag}--\eqref{normalisationNfrag} from the solution of a fixed point problem. First, we notice that the size $x$ of a cell and its size increment $a$ grow at the same speed $g(x)$, so the quantity $x-a$ remains constant: it corresponds to the birth-size of the cell, denoted by $s$. To simplify the equation and obtain horizontal straight lines as characteristics (see Figure \ref{fig:characteristics}), we give a description of the population with size increment $a$ and birth-size $s$, namely we set
\beq \label{relNM}
M(a,s):=N(a,a+s).
\eeq
Thanks to this relation, it is equivalent to prove the existence of an eigenvector for the increment-size system or for the increment/birth-size system. To determine the equation verified by $M$, we compute the partial derivatives of $xN(a,x)=(a+s)M(a,s)$,  which leads to the equation $$\partial_a((a+s)M(a,s))+\left(1 + (a+s)B(a)\right)M(a,s)=0.$$ Writing the non-local boundary condition \eqref{bordnaissanceNfrag} with the new variables takes less calculation and a more interpretation. In \eqref{bordnaissanceNfrag} the number of cells born at size $s$ resulted of the division of cells at size~$\f{s}{z}.$ 
Then the equivalent of \eqref{bordnaissanceNfrag} in the new variables with a linear growth rate is given by \[M(0,s)=\int_\theta^\eta  \int_0^\frac{s}{z} B(a)M(a,\frac{s}{z}-a)\diff a\ \frac{\dmz}{z^2} \] since there is no mass for $a\geq \frac{s}{z}.$ With the relation \eqref{relNM}, it is equivalent to solve \eqref{equationNfrag}--\eqref{normalisationNfrag} and to solve
\begin{subequations}
  \begin{empheq}[left=\empheqlbrace]{align}
&\partial_a((a+s)M(a,s))+(1 +(a+s)B(a))M(a,s)=0,\qquad a,s>0, \label{equationMfrag} \vspace{2mm}\\
&\dis M(0,s)=\int_\theta^\eta  \int_0^\frac{s}{z} B(a)M(a,\frac{s}{z}-a)\diff a\ \frac{\dmz}{z^2},\qquad s>0, \label{bordnaissanceMfrag} \vspace{2mm}\\
& M(a,s)\geq 0, \qquad a,s\geq 0,\label{positiviteMfrag} \vspace{2mm}\\
& \int_{\mathbb{R}_+^2} M(a,s) \diff a \diff s=1. \label{normalisationMfrag}
  \end{empheq}
\end{subequations}
Considering the variable $s$ as a parameter in \eqref{equationMfrag}, we see this equation as an ODE in the variable $a.$ A formal solution is given by
\[
M(a,s)=\frac{\Psi(a)}{(a+s)^2} s^2M(0,s).\]
Having this expression in mind, we note that for any nonnegative function $f\in \Leb^1(\R_+,\diff s)$, the function $M_f$ defined on $\R_+^2$ by
\[M_f : (a,s) \mapsto \frac{\Psi(a)}{(a+s)^2}f(s)\]
is a solution of \eqref{equationMfrag} and satisfies \eqref{positiviteMfrag}. Then it remains to choose the appropriate function $f$ and normalize the related function $M_f$ to solve the whole system \eqref{equationMfrag}--\eqref{normalisationMfrag}. It turns out that this appropriate function $f$ is a fixed point of the operator $T :\Leb^1(\R_+) \to \Leb^1(\R_+)$ defined by
\beq \label{eq:defTfrag}
T f(s) = \int_\theta^{\eta}\int_0^{\f{s}{z}} \Phi(\f{s}{z}-a)f(a)\diff a \dmz,
\eeq
where $\Phi=B\Psi,$ as stated in the following lemma.
\begin{lem} \label{equivfixpt}
The function $M_f$ satisfies \eqref{bordnaissanceMfrag} if and only if $f$ is a fixed point of the operator~$T$.
\end{lem}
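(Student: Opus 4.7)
The proof is a direct substitution followed by a change of variable, with no real obstacle. The plan is to plug $M_f$ into both sides of the boundary condition \eqref{bordnaissanceMfrag} and verify that the resulting identity is precisely $f = Tf$, up to a harmless relabeling in the inner integral.

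First I would evaluate the left-hand side: since $\Psi(0)=1$, one has
\[
M_f(0,s) = \frac{\Psi(0)}{s^2}\,f(s) = \frac{f(s)}{s^2}.
\]
Then I would rewrite the right-hand side. For each fixed $z\in[\theta,\eta]$ and $a\in(0,s/z)$, the key simplification is that $a+(s/z-a)=s/z$, so
\[
M_f\bigl(a,\tfrac{s}{z}-a\bigr) \;=\; \frac{\Psi(a)}{(s/z)^2}\,f\bigl(\tfrac{s}{z}-a\bigr) \;=\; \frac{z^2\,\Psi(a)}{s^2}\,f\bigl(\tfrac{s}{z}-a\bigr).
\]
Multiplying by $B(a)$, using $\Phi=B\Psi$, and inserting the factor $d\mu(z)/z^2$ makes the $z^2$ cancel, yielding
\[
\int_\theta^\eta\!\int_0^{s/z} B(a)\,M_f\bigl(a,\tfrac{s}{z}-a\bigr)\,\mathrm{d}a\,\frac{\mathrm{d}\mu(z)}{z^2} \;=\; \frac{1}{s^2}\int_\theta^\eta\!\int_0^{s/z}\Phi(a)\,f\bigl(\tfrac{s}{z}-a\bigr)\,\mathrm{d}a\,\mathrm{d}\mu(z).
\]
At this stage the boundary condition \eqref{bordnaissanceMfrag} for $M_f$ is equivalent to
\[
f(s) \;=\; \int_\theta^\eta\!\int_0^{s/z}\Phi(a)\,f\bigl(\tfrac{s}{z}-a\bigr)\,\mathrm{d}a\,\mathrm{d}\mu(z).
\]

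Finally, I would apply the change of variable $a'=s/z - a$ in the inner integral, which swaps the roles of $\Phi$ and $f$ and exactly matches the definition \eqref{eq:defTfrag} of $T$. The equivalence $M_f$ satisfies \eqref{bordnaissanceMfrag} $\iff$ $f=Tf$ then follows. The calculation is reversible at each step (both sides being well-defined for $f\in\Leb^1(\R_+)$ thanks to the local integrability of $\Phi=B\Psi$ and the Fubini theorem), so no extra care is needed beyond verifying that the manipulations make sense for general $f\in\Leb^1(\R_+)$.
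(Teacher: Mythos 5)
Your proof is correct and follows exactly the same route as the paper: evaluate $M_f(0,s)=f(s)/s^2$ using $\Psi(0)=1$, substitute $M_f$ into the right-hand side of \eqref{bordnaissanceMfrag} so that $\Phi=B\Psi$ appears and the factors $z^2$ cancel, then apply the change of variable $a\mapsto s/z-a$ in the inner integral to recover the kernel $\Phi(s/z-a)f(a)$ defining $T$. Nothing is missing.
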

\begin{proof}
\begin{align*}
M_f\  \mathrm{~satisfies~} \eqref{bordnaissanceMfrag} & \iff \frac{f(s)}{s^2}=\int_\theta^\eta \int_0^\frac{s}{z} B(a)\frac{\Psi(a)}{(\frac{s}{z})^2} f(\frac{s}{z}-a) \diff a\ \frac{\dmz}{z^2} \\
& \iff f(s)=\int_\theta^\eta \int_0^\frac{s}{z} \Phi(a) f(\frac{s}{z}-a)\diff a\ \dmz \\
& \iff f(s)=\int_\theta^\eta \int_0^\frac{s}{z} \Phi(\frac{s}{z}-a) f(a)\diff a\ \dmz \\
& \iff f(s)=Tf(s)
\end{align*}
\end{proof}

\begin{center}
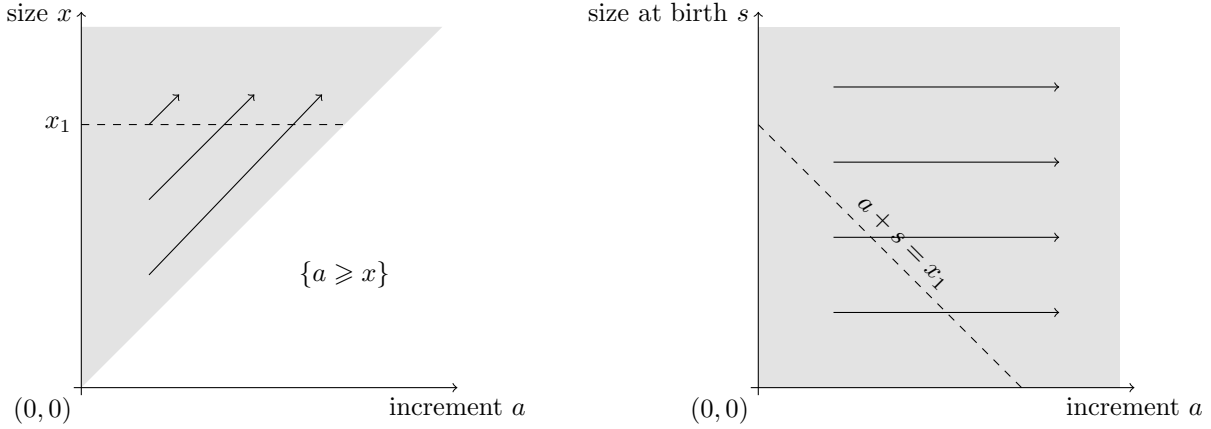
\begin{figure}
\begin{tikzpicture}
\draw (0,0) node[below left] {$(0,0)$};
\draw (0,5) node[left] {$\mathrm{size}~x$};
\draw (5,0) node[below] {$\mathrm{increment}~a$};

\draw (3.5,1.5) node {$\{a\geq x\}$};

\fill [gray!22] (0,0)--(4.8,4.8)--(0,4.8)-- cycle;
\draw [->] (-0.1,0)--(5,0);
\draw [->] (0,-0.1)--(0,5);

\draw [dashed] (0,3.5)--(3.5,3.5);
\draw (0,3.5) node[left]{$x_1$};

\draw [->] (0.9,1.5)--(3.2,3.9);
\draw [->] (0.9,2.5)--(2.3,3.9);
\draw [->] (0.9,3.5)--(1.3,3.9);

\fill [gray!22] (9,0)--(13.8,0)--(13.8,4.8)--(9,4.8)-- cycle;

\draw [->] (10,1)--(13,1);
\draw [->] (10,2)--(13,2);
\draw [->] (10,3)--(13,3);
\draw [->] (10,4)--(13,4);

\draw [dashed] (12.5,0)--(9,3.5) node[midway, above, sloped] {$a+s=x_1$};

\draw [->] (8.9,0)--(14,0);
\draw [->] (9,-0.1)--(9,5);
\draw (9,0) node[below left] {$(0,0)$};
\draw (14,0) node[below] {$\mathrm{increment}~a$};
\draw (9,5) node[left] {$\mathrm{size~at~birth}~s$};

\end{tikzpicture}
\caption{Domain of the model, with respect to the choice of variables to describe the bacterium. Grey: domain where the bacterias densities may be positive. Arrows: transport.
Left: size increment/size. Right: size increment/birth size. Dashed: location of cells of size $x_1$. \label{fig:characteristics}}

\end{figure}
\end{center}

The operator $T$ can be seen as some kind of \emph{transition operator}: it links the laws of birth size of two successive generations. If $f$ is the law of the parents, then $Tf$ is the law of the birth size of the newborn cells. Indeed, Equation \eqref{eq:defTfrag} can be understood in words as `the number of cells born at size $s$ come from the ones that were born at size $a\in[b_\theta,\frac{s}{z}]$ and elongated of $\f{s}{z}-a$ for all $z\in [\theta,\eta]$ and all $a$ before dividing into new cells’. See \cite{DHKR} for a probabilistic viewpoint on the conservative size equation. 
%
It is easy to check that $T$ is a continuous linear operator on $\Leb^1(\R_+)$ and that $\|T\|_{\mathcal{L}(\Leb^1(\R_+))}\leq \|\Phi\|_{\Leb^1(\R_+)}=1$ using \eqref{massconservation} and \eqref{Phi}.
The following lemma provides a slightly stronger result.
\begin{lem} \label{stabiliteTL1poids}
For all $l\leq 0$, the operator $T$ maps continuously $\Leb^1(\R_+,s^l\diff s)$ into itself. Additionally, if \eqref{decayPsi} holds true, then $T$ maps continuously $\Leb^1(\R_+,(s^k+s^l)\diff s)$ into itself for any $l\leq0$ and  $k\in[0,k_0).$
\end{lem}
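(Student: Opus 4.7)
The plan is to estimate the weighted $\Leb^1$ norm of $Tf$ by applying Fubini's theorem and a change of variables, then exploiting the sign of the exponent. Without loss of generality we assume $f\geq 0$, since $|Tf|\leq T|f|$. For any nonnegative weight $w$, performing the substitutions $u=s/z$ and then $v=u-a$ in the defining formula \eqref{eq:defTfrag} gives
\[
\int_0^\infty Tf(s)\,w(s)\diff s = \int_\theta^\eta z\int_0^\infty f(a)\int_0^\infty w\bigl(z(a+v)\bigr)\Phi(v)\diff v\diff a\dmz.
\]
This identity is the workhorse of the proof.

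For the first assertion, apply it to $w(s)=s^l$ with $l\leq 0$. Since $s\mapsto s^l$ is nonincreasing, $(a+v)^l\leq a^l$ for all $a,v\geq 0$, and $\int_0^\infty\Phi(v)\diff v=1$ by \eqref{massconservation} applied to $\Phi$, hence
\[
\|Tf\|_{\Leb^1(\R_+,s^l\diff s)}\leq \Bigl(\int_\theta^\eta z^{l+1}\dmz\Bigr)\|f\|_{\Leb^1(\R_+,s^l\diff s)}.
\]
The constant is finite because $z\geq \theta>0$ on $\supp\mu$ and $\mu$ is a finite measure; continuity of $T$ on this space follows.

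For the second assertion and $k\in[0,k_0)$, the elementary bound $(a+v)^k\leq 2^k(a^k+v^k)$ plugged into the same identity yields
\[
\|Tf\|_{\Leb^1(\R_+,s^k\diff s)}\leq 2^k\Bigl(\int_\theta^\eta z^{k+1}\dmz\Bigr)\Bigl(\|f\|_{\Leb^1(\R_+,s^k\diff s)}+m_k\,\|f\|_{\Leb^1(\R_+)}\Bigr),
\]
where $m_k:=\int_0^\infty v^k\Phi(v)\diff v$. The main point is to show $m_k<\infty$; this is where hypothesis \eqref{decayPsi} enters. For $k=0$, $m_0=1$. For $k>0$, an integration by parts using $\Phi=-\Psi'$ gives $m_k=k\int_0^\infty v^{k-1}\Psi(v)\diff v$, the boundary terms vanishing since $\Psi(0)=1$, $k>0$, and $v^k\Psi(v)=\mathcal{O}(v^{k-k_0})\to 0$ at infinity by \eqref{decayPsi}. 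The remaining integral converges at $0$ because $k-1>-1$ and at infinity because $v^{k-1}\Psi(v)=\mathcal{O}(v^{k-1-k_0})$ with $k-1-k_0<-1$. Combining the two bounds and observing that $s^k+s^l\geq 1$ for every $s>0$ (split $s\lessgtr 1$), so that $\|f\|_{\Leb^1(\R_+)}\leq\|f\|_{\Leb^1(\R_+,(s^k+s^l)\diff s)}$, gives the desired continuity on $\Leb^1(\R_+,(s^k+s^l)\diff s)$.

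The only nontrivial step is the finiteness of the moment $m_k$, which is precisely what the quantitative decay hypothesis \eqref{decayPsi} is designed to guarantee; the rest is Fubini and the monotonicity of the power function.
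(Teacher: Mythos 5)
Your proof is correct and follows essentially the same strategy as the paper's: Fubini and the change of variables $u=s/z$, $v=u-a$ reduce the weighted norm of $Tf$ to moments of $\Phi$; for $l\leq 0$ one uses the monotonicity $(a+v)^l\leq a^l$, and for $k\in[0,k_0)$ one splits $(a+v)^k\leq C(a^k+v^k)$ and verifies $\int v^k\Phi(v)\diff v<\infty$ by integration by parts exactly as in the paper (the paper just works on truncated intervals $[\alpha,\beta]$ and passes to the limit rather than writing your single clean identity on all of $\R_+$, but this is cosmetic). One small correction: the identity $\int_0^\infty\Phi=1$ does not follow from \eqref{massconservation}, which concerns $\mu$, but rather from $\Phi=-\Psi'$, $\Psi(0)=1$ and $\Psi(\infty)=0$ (the latter needing \eqref{decayPsi}); however for the first assertion of the lemma, where \eqref{decayPsi} is not assumed, the inequality $\int_0^\infty\Phi\leq 1$ is automatic from $\Psi\geq 0$ and suffices for your estimate.
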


\begin{proof}
We start with $\Leb^1(\R_+,s^l\diff s)$ where $l\leq0.$
For $f\in\Leb^1(\R_+,s^l\diff s)$ and $\beta>\alpha>0$ one has
\begin{align*}
\int_\alpha^\beta |Tf(s)|s^l\diff s & \leq \int_\theta^{\eta}\int_\alpha^\beta s^l\int_0^{\f{s}{z}} \Phi(\f{s}{z}-a) |f(a)|\diff a \diff s \dmz \\
	& \leq \int_\theta^\eta \int_0^{\f{\alpha}{z}} |f(a)| \int_\alpha^\beta \Phi(\f{s}{z}-a) s^l\diff s \diff a \dmz \\
	& \qquad + \int_\theta^\eta \int_{\f{\alpha}{z}}^{\f\beta z} |f(a)| \int_{za}^\beta \Phi(\f{s}{z}-a) s^l\diff s \diff a \dmz \\
	& \leq \int_\theta^\eta \int_0^{\f{\alpha}{z}} |f(a)| \int_{\f{\alpha}{z}-a}^{\f{\beta}{z}-a} \Phi(\sigma) (a+\sigma)^lz^{l+1}\diff \sigma \diff a \dmz  \\
	& \qquad + \int_\theta^\eta \int_{\f{\alpha}{z}}^{\f\beta z} |f(a)| \int_0^{\f{\beta}{z}-a} \Phi(\sigma) (a+\sigma)^lz^{l+1}\diff \sigma \diff a \dmz \\
	& \leq \int_\theta^\eta z^{l+1} \int_0^{\f{\alpha}{z}} |f(a)|a^l\diff a \dmz + \int_\theta^\eta z^{l+1} \int_{\f{\alpha}{z}}^\infty |f(a)|a^l\diff a \dmz \\
	& \leq \theta^l\|f\|_{\Leb^1(\R_+,s^l \diff s)},
\end{align*}
which gives the conclusion by passing to the limits $\alpha\to0$ and $\beta\to+\infty.$

\medskip

For the second part we begin with the proof that under condition \eqref{decayPsi}, for any $k\in[0,k_0)$ one has
\[ 
\int_0^\infty \Phi(a)a^k\diff a < \infty.
\]
First, recall that $\int_0^\infty \Phi(a)\diff a = 1$ and $\Phi = -\Psi'$. Integrating by parts for $\beta\geq 1$, one has
\[\int_0^\beta \Phi(a)a^k\diff a\leq \int_0^1 \Phi(a)\diff a + \int_1^\beta \Phi(a)a^k\diff a \leq 1 + k\int_1^\beta \Psi(a) a^{k-1}\diff a \]
and the last integral converges when $\beta\to+\infty$ under Assumption~\eqref{decayPsi} because $k<k_0.$
Now let $l\leq0$ and $k\in[0,k_0),$ and let $f\in \Leb^1(\R_+,(s^k+s^l)\diff s)$.
Due to the first part of the proof, we only have to estimate $\int_0^\beta  |Tf(s)|s^k\diff s$ for $\beta>0.$
Since the function $x\mapsto \frac{(1+x)^k}{1+x^k}$ is uniformly bounded on $\R_+,$ there exists of a constant $C>0$ such that $(a+\sigma)^k\leq C(a^k+\sigma^k)$ for all $a,\sigma\geq0,$
and it allows us to write for any $\beta>0$
\begin{align*}
\int_0^\beta  |Tf(s)|s^k\diff s &\leq \int_\theta^{\eta}\int_0^\beta s^k \int_0^{\f{s}{z}} \Phi(\f{s}{z}-a) |f(a)|\diff a  \diff s \dmz \\
   & = \int_\theta^{\eta}\int_0^{\f{\beta}{z}} |f(a)| \int_0^{\f{\beta}{z}-a} \Phi(\sigma) (a+\sigma)^kz^{k+1}\diff \sigma \diff a  \dmz \\
   & \leq C\int_\theta^{\eta} z^{k+1} \int_0^{\f{\beta}{z}} |f(a)|a^k \int_0^{\f{\beta}{z}-a} \Phi(\sigma) \diff \sigma \diff a  \dmz \\
   & \quad + C\int_\theta^{\eta} z^{k+1} \int_0^{\f{\beta}{z}}|f(a)| \int_0^{\f{\beta}{z}-a} \Phi(\sigma) \sigma^k\diff \sigma \diff a \dmz \\
   & \leq C\eta^k\left(\|f\|_{\Leb^1(\R_+,s^k\diff s)}+\|\Phi\|_{\Leb^1(\R_+,s^k\diff s)}\|f\|_{\Leb^1(\R_+)}\right).
\end{align*}

\end{proof}

\section{The fixed point problem}\label{sec:existence}

In this section we prove the existence of a unique nonnegative and normalized fixed point of the operator~$T.$

\medskip

Let us first recall some definitions from the Banach lattices theory (for more details, see \cite{Du, Schaefer}). Let $\Omega$ be a subset of $\R_+$ and $\nu$ be a positive measure on $\Omega.$ The space $\Leb^1(\Omega,\nu)$ is an ordered set with the partial order defined by \[f\geq 0\text{ if and only if } f(s)\geq 0\ \nu\text{-a.e. on }\Omega.\] Furthermore, endowed with its standard norm, the space $\Leb^1(\Omega,\nu)$ is a Banach lattice, \emph{i.e.} a real Banach space endowed with an ordering $\geq$ compatible with the vector structure such that, if $f, g \in \Leb^1(\Omega,\nu)$ and $|f|\geq |g|,$ then $\|f\|_{\Leb^1(\Omega,\nu)} \geq \|g\|_{\Leb^1(\Omega,\nu)}.$ A vector subspace $I\subset\Leb^1(\Omega,\nu)$ is called an ideal if $f\in I, g\in \Leb^1(\Omega,\nu)$ and $|g|\leq |f|$ implies $g\in I.$ For a given operator $A$ defined on $\Leb^1(\Omega,\nu)$, a closed ideal $I$ is $A$-\emph{invariant} if $A(I)\subset I,$ and $A$ is irreducible if the only $A$-invariant ideals are $\{0\}$ and $\Leb^1(\Omega,\nu)$. To each closed ideal $I$ in the Banach lattice $\Leb^1(\Omega,\nu)$ corresponds a subset $\omega \subset \Omega$ such that $I=\{f\in \Leb^1(\Omega,\nu), \supp f \subset \omega\}$. We also define the \emph{positive cone} $\Leb^1_+(\Omega,\nu):=\{f\in \Leb^1(\Omega,\nu)\ |\ f\geq 0\ \nu\text{-a.e. on }\Omega\}$. An operator $A:\Leb^1(\Omega,\nu)\rightarrow \Leb^1(\Omega,\nu)$ is said to be positive if $A(\Leb^1_+(\Omega,\nu))\subset \Leb^1_+(\Omega,\nu).$
\
To prove the existence of an eigenvector associated to the eigenvalue $1$, we will use the following theorem, easily deduced from Krein-Rutman's theorem (see \cite{Du} for instance) and De Pagter's \cite{dePagter1986}.
\begin{theorem} \label{thmKRdP}
Let $A : \Leb^1(\Omega,\nu)\rightarrow \Leb^1(\Omega,\nu)$ be a non-zero positive compact irreducible operator. Then its spectral radius $\rho(A)$ is a nonnegative eigenvalue associated to a nonzero eigenvector belonging to  the positive cone $\Leb^1_+(\Omega,\nu).$
\end{theorem}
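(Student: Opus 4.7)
The strategy is simply to chain the two classical results explicitly cited alongside the statement: De Pagter's theorem, which ensures that a non-zero positive compact irreducible operator on a Banach lattice has strictly positive spectral radius, and Krein-Rutman's theorem, which produces a nonzero nonnegative eigenvector associated to a positive spectral radius of a positive compact operator. Since $\Leb^1(\Omega,\nu)$ with its canonical order and norm has just been recalled to be a Banach lattice, both abstract statements apply directly to $A$ without any reformulation.

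First I would check that the hypotheses match those of De Pagter's theorem~\cite{dePagter1986}: the ambient space is a Banach lattice, and $A$ is positive, compact, non-zero, and irreducible in the sense defined just before the theorem (no non-trivial closed $A$-invariant ideal). Invoking De Pagter then yields $\rho(A)>0$ directly.

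Once strict positivity of $\rho(A)$ is secured, I would appeal to the Krein-Rutman theorem in its Banach-lattice formulation~\cite{Du}: for a positive compact operator on a Banach lattice whose spectral radius is positive, $\rho(A)$ belongs to the point spectrum of $A$, and at least one associated eigenvector lies in the positive cone $\Leb^1_+(\Omega,\nu)$. Combined with the previous step, this is exactly the conclusion of the theorem.

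The only non-trivial input is De Pagter's theorem itself, whose proof rests on a Lomonosov-type invariant-subspace argument ruling out $\rho(A)=0$ by constructing a non-trivial closed $A$-invariant ideal, in contradiction with irreducibility. That argument is entirely external to the paper, so no obstacle is expected at our level: the chaining with Krein-Rutman is purely formal and amounts to a one-line deduction once both results are accepted as black boxes.
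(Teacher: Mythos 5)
Your proposal matches the paper's intent exactly: the theorem is presented without a written-out proof, the authors simply noting that it is ``easily deduced from Krein-Rutman's theorem (see \cite{Du} for instance) and De Pagter's \cite{dePagter1986}.'' Invoking De Pagter to secure $\rho(A)>0$ and then the Banach-lattice form of Krein-Rutman to obtain a nonnegative eigenvector for $\rho(A)$ is precisely the chaining the authors have in mind, and you correctly identify that De Pagter's step is indispensable since Krein-Rutman alone cannot rule out $\rho(A)=0$ for a compact operator on an infinite-dimensional space.
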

Due to a lack of compactness of the operator $T$, which is due to the lack of compactness of $\R_+$, we truncate the operator $T$ into a family of operators $(T_\Sigma)_\Sigma$. Let $b_\theta=\f{\theta}{1-\theta}b$ and for $\Sigma>b_\theta$ define the operator $T_\Sigma$ on $\Leb^1((b_\theta,\Sigma))$ by 
\begin{align} 
T_\Sigma f(s) & = \int_\theta^\eta\int_{b_\theta}^{\min (\f{s}{z}, \Sigma)} \Phi(\f{s}{z}-a)f(a)\diff a \dmz \label{TSigma} \\
			& = \nonumber\left\{
\begin{array}{l}
\dis \int_\theta^\eta\int_{b_\theta}^{\f{s}{z}} \Phi(\f{s}{z}-a)f(a)\diff a \dmz,\qquad b_\theta \leq s < \theta \Sigma, \vspace{2mm}\\
\dis \int_\theta^{\f{s}{\Sigma}}\int_{b_\theta}^\Sigma \Phi(\f{s}{z}-a)f(a)\diff a \dmz + \int_{\f{s}{\Sigma}}^{\eta}\int_{b_\theta}^{\f{s}{z}} \Phi(\f{s}{z}-a)f(a)\diff a \dmz,\qquad \theta \Sigma \leq s \leq \eta \Sigma, \vspace{2mm}\\
\dis \int_\theta^\eta\int_{b_\theta}^\Sigma \Phi(\f{s}{z}-a)f(a)\diff a \dmz,\qquad \eta \Sigma < s \leq \Sigma. \end{array} \right.
\end{align}
Defining the lower bound of the domain as $b_\theta$ will ensure the irreducibility of $T_\Sigma.$
We will apply Theorem~\ref{thmKRdP} to the operator $T_\Sigma$ for $\Sigma$ large enough to prove the existence of a pair $(\rho_\Sigma,f_\Sigma)$ such that $T_\Sigma f_\Sigma = \rho_\Sigma f_\Sigma.$ Then, we will prove that there exists a unique $f$ in a suitable space such that $\rho_\Sigma \to 1$ and $f_\Sigma \to f$ as $\Sigma \to \infty,$ with $f$ satisfying $Tf=f.$
\
The following lemma ensures that the truncated operator $T_\Sigma$ is well defined.
\begin{lem}\label{lemmeTTSigma}
If $f \in \Leb^1(\R_+)$ and $\supp f \subset [b_\theta,\Sigma]$, then
\[
(Tf)_{|[b_\theta,\Sigma]}=T_\Sigma (f_{|[b_\theta,\Sigma]}).
\]
\end{lem}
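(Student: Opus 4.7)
The plan is to verify the identity by direct substitution, using the support condition on $f$ to reduce the limits of integration in the expression for $Tf(s)$ to those appearing in the definition of $T_\Sigma$.

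First I would fix $s\in[b_\theta,\Sigma]$ and write
\[
Tf(s)=\int_\theta^\eta\int_0^{\f{s}{z}} \Phi(\f{s}{z}-a)f(a)\diff a \dmz.
\]
Because $\supp f\subset[b_\theta,\Sigma]$, the contribution of $a\in[0,b_\theta)$ to the inner integral is zero, and likewise the contribution of $a\in(\Sigma,\f{s}{z}]$ (when this interval is non-empty) vanishes. Moreover, since $z\leq\eta<1$ and $s\geq b_\theta$, we have $\f{s}{z}\geq s\geq b_\theta$, so replacing the lower bound $0$ by $b_\theta$ is legitimate. Putting these observations together yields
\[
Tf(s)=\int_\theta^\eta\int_{b_\theta}^{\min(\f{s}{z},\Sigma)} \Phi(\f{s}{z}-a)f(a)\diff a \dmz,
\]
which is exactly the compact form of $T_\Sigma(f_{|[b_\theta,\Sigma]})(s)$ appearing on the first line of \eqref{TSigma}.

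The only remaining step is to check that this compact expression agrees with the three-case piecewise formula defining $T_\Sigma$. The three ranges in \eqref{TSigma} correspond to the position of $\Sigma$ relative to $\f{s}{z}$ as $z$ varies over $[\theta,\eta]$: for $b_\theta\leq s<\theta\Sigma$ one has $\f{s}{z}<\Sigma$ uniformly in $z\in[\theta,\eta]$; for $\theta\Sigma\leq s\leq\eta\Sigma$ the interval $[\theta,\eta]$ splits at the threshold $z=\f{s}{\Sigma}$; and for $\eta\Sigma<s\leq\Sigma$ one has $\f{s}{z}\geq\Sigma$ uniformly. A routine case analysis then shows that the compact formula reduces to the corresponding piece of the definition in each region.

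The main obstacle is therefore purely bookkeeping to handle the three sub-cases; there is no analytic content beyond the observation that the support hypothesis on $f$ forces the cancellation of any mass lying outside $[b_\theta,\Sigma]$.
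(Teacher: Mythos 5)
Your proof is correct and is exactly the elaboration the authors have in mind: the paper simply states that the lemma is ``a straightforward consequence of the definition of operator $T_\Sigma$'' without writing out the details. You supply those details --- truncating the inner integral to $[b_\theta,\min(\f{s}{z},\Sigma)]$ via the support hypothesis, and then checking that the compact formula matches the three-case definition --- which is the intended argument.
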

Lemma \ref{lemmeTTSigma} is a straightforward consequence of the definition of operator $T_\Sigma$ by \eqref{TSigma}. From Lemmas~\ref{stabiliteTL1poids} and~\ref{lemmeTTSigma}, we deduce that $T_\Sigma$ has the same stability mapping properties as $T.$ To prove the compactness of the operator $T_\Sigma$ for a fixed $\Sigma$ and later on that the family $(T_\Sigma)_\Sigma$ is also compact, we use a particular case of a corollary of the Riesz-Fr\'echet-Kolmogorov theorem. First, we define two properties for a bounded subset $\mathcal{F}$ of $\Leb^1(\Omega,\nu)$ with $\Omega$ an open subset of $\R_+,$ and $\nu$ a positive measure, the first on translations and the second on the absence of mass on the boundary of the domain
\begin{align}
&\label{translation}
\left\{\begin{array}{l}
\forall \epsilon >0,\ \forall \omega \subset \subset \Omega,\ \exists \delta \in (0,\mathrm{dist}(\omega,\!^c\Omega)) \mathrm{\ such\ that\ }\\ 
\|\tau_h f- f\|_{\Leb^1(\omega,\nu)}<\epsilon,\ \forall h\in (-\delta,\delta),\ \forall f\in \mathcal{F}
\end{array}\right. \\
&\label{massebord}
\left\{\begin{array}{l}
\forall \epsilon >0,\ \exists \omega \subset \subset \Omega, \mathrm{\ such\ that\ }\\ 
\|f\|_{\Leb^1(\Omega\setminus \omega, \nu)}<\epsilon,\ \forall f\in \mathcal{F}
\end{array}\right.
\end{align}
where $\!^c\Omega$ is understood as the complement of this set in $\R_+.$ \begin{theorem}[from \cite{Brezis}, corollary 4.27] \label{compactBrezis} If $\mathcal{F}$ is a bounded set of $\Leb^1(\Omega,\nu)$ such that \eqref{translation} and \eqref{massebord} hold true, then $\mathcal{F}$ is relatively compact in $\Leb^1(\Omega, \nu).$
\end{theorem}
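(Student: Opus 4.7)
The plan is to prove that $\mathcal{F}$ is totally bounded in $\Leb^1(\Omega,\nu)$, along the classical Riesz--Fr\'echet--Kolmogorov scheme: \eqref{massebord} reduces the problem to a compactly contained piece $\omega\ssubset\Omega$, while \eqref{translation} combined with a mollification argument reduces that piece to the Arzel\`a--Ascoli theorem in $C(\overline\omega)$.

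Given $\epsilon>0$, first apply \eqref{massebord} to fix some $\omega\ssubset\Omega$ with $\|f\|_{\Leb^1(\Omega\setminus\omega,\nu)}<\epsilon/3$ for every $f\in\mathcal{F}$; by the triangle inequality it suffices to cover the restrictions $\mathcal{F}|_\omega$ by finitely many $(\epsilon/3)$-balls of $\Leb^1(\omega,\nu)$. To this end, pick a smooth nonnegative mollifier $\rho_\delta$ supported in $(-\delta,\delta)$ of unit Lebesgue integral with $\delta<\mathrm{dist}(\omega,{}^c\Omega)$, extend each $f\in\mathcal{F}$ by zero to $\R$, and set $f^\delta:=\rho_\delta*f$. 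A change of variables together with Fubini produces
\[
\|f^\delta-f\|_{\Leb^1(\omega,\nu)}\leq\int_{-\delta}^{\delta}\rho_\delta(h)\,\|\tau_h f-f\|_{\Leb^1(\omega,\nu)}\diff h,
\]
which by \eqref{translation} can be made less than $\epsilon/6$ uniformly in $f\in\mathcal{F}$ once $\delta$ is small enough. Meanwhile $\{f^\delta:f\in\mathcal{F}\}$ is uniformly bounded and equi-Lipschitz on $\overline\omega$ (the pointwise bounds on $\rho_\delta$ and $\rho_\delta'$ together with the uniform $\Leb^1$-bound on $\mathcal{F}$ deliver both estimates), so Arzel\`a--Ascoli yields relative compactness in $C(\overline\omega)$; combined with $\nu(\overline\omega)<\infty$, this gives a finite $(\epsilon/6)$-cover of $\{f^\delta\}$ in $\Leb^1(\omega,\nu)$, and two triangle inequalities then assemble these pieces into the required $\epsilon$-cover of $\mathcal{F}$.

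The step I expect to be the main obstacle is the interface between convolution, which is defined against Lebesgue measure, and the norm $\|\cdot\|_{\Leb^1(\omega,\nu)}$ through which compactness is measured. Both the sup-norm bound on $f^\delta$ (via $\|f^\delta\|_\infty\leq\|\rho_\delta\|_\infty\|f\|_{\Leb^1(\R)}$) and the transfer from $C(\overline\omega)$-control to $\Leb^1(\omega,\nu)$-control tacitly require $\nu$ to be locally comparable to Lebesgue measure, for instance to admit a density locally bounded above and locally bounded away from zero, with $\nu(\overline\omega)<\infty$. The weighted measures $s^l\diff s$ and $(s^k+s^l)\diff s$ appearing elsewhere in the paper satisfy this on every $\omega\ssubset(0,\infty)$, which is precisely the regime in which Theorem~\ref{compactBrezis} will be applied.
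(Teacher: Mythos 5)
The paper does not give a proof of this statement: it is quoted directly from Brezis (Corollary~4.27) and used as a black box. Your proof reconstructs essentially the argument Brezis gives for his Theorem~4.26 and Corollary~4.27 -- mollify, control the mollification error by the translation hypothesis, apply Arzel\`a--Ascoli to the mollified family, and glue with the no-mass-at-the-boundary hypothesis -- so there is nothing ``different'' to compare against; this \emph{is} the standard Riesz--Fr\'echet--Kolmogorov scheme.

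What you do add, and correctly, is the observation that the statement as printed in the paper is slightly stronger than the cited source: Brezis's Corollary~4.27 is for Lebesgue measure, while the paper writes $\Leb^1(\Omega,\nu)$ for a general positive measure $\nu$. Your proof exposes exactly where this matters: the convolution $f^\delta=\rho_\delta*f$ is a Lebesgue-measure operation, so the sup-norm and equi-Lipschitz bounds $\|f^\delta\|_\infty\leq\|\rho_\delta\|_\infty\|f\|_{\Leb^1(\omega',\diff s)}$ and $\|(f^\delta)'\|_\infty\leq\|\rho_\delta'\|_\infty\|f\|_{\Leb^1(\omega',\diff s)}$ (with $\omega'=\omega+(-\delta,\delta)\ssubset\Omega$) require a bound on $\|f\|_{\Leb^1(\omega',\diff s)}$, which must be inferred from the given bound on $\|f\|_{\Leb^1(\Omega,\nu)}$. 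This, together with the step transferring $C(\overline\omega)$-closeness to $\Leb^1(\omega,\nu)$-closeness, needs $\nu$ to have a density with respect to Lebesgue measure that is locally bounded above and away from zero on compact subsets of $\Omega$. In the paper $\nu$ is always of the form $s^l\diff s$ or $(s^k+s^l)\diff s$ on an interval bounded away from $0$, so the hypothesis holds in every application; but it is not a consequence of ``$\nu$ a positive measure'' as stated. Flagging this gap and verifying it is satisfied in context is the right thing to do. The bookkeeping at the end (matching the $\epsilon/3$'s and $\epsilon/6$'s into an $\epsilon$-net of $\mathcal{F}$ itself, with centers chosen in $\mathcal{F}$) is routine and unproblematic. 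Overall your proof is correct, under the local-comparability hypothesis you identify.
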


\subsection{Existence of a principal eigenfunction for $T_\Sigma$}

Using Theorem~\ref{thmKRdP}, we prove the existence of an eigenpair $(\rho_\Sigma, f_\Sigma)$ for the operator $T_\Sigma.$
\begin{prop} \label{existencefSigma}
Let $l$ be a nonpositive number. Under the hypotheses \eqref{massconservation}, \eqref{probameasure} and \eqref{supportB}, there exists a unique normalized eigenvector $f_\Sigma \in \Leb^1_+(\Omega,\nu)$ of the operator $T_\Sigma$ in $\Leb^1((b_\theta,\Sigma),s^l\diff s)$ associated to the spectral radius $\rho_\Sigma$ for every $\Sigma>\max(\frac{1}{1-\theta}b,1)$.
\end{prop}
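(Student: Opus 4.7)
My plan is to apply the Krein--Rutman/De Pagter statement of Theorem~\ref{thmKRdP} to the operator $T_\Sigma$ acting on the Banach lattice $E_\Sigma:=\Leb^1((b_\theta,\Sigma),s^l\diff s)$. This reduces the proposition to verifying four properties of $T_\Sigma$: positivity, non-triviality, compactness and irreducibility. Stability of $E_\Sigma$ under $T_\Sigma$ is a direct consequence of Lemmas~\ref{stabiliteTL1poids} and~\ref{lemmeTTSigma}, positivity is immediate from~\eqref{TSigma}, and non-triviality is easily checked on the test function $\mathds{1}_{[b_\theta,\Sigma]}$, the hypothesis $\Sigma>\tfrac{b}{1-\theta}$ guaranteeing a set of $(s,z,a)$ of positive measure for which $\Phi(s/z-a)>0$. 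Uniqueness of the normalized positive eigenvector, once its existence is established, will come from the one-dimensionality of the principal eigenspace of an irreducible positive compact operator, the standard strengthening of Krein--Rutman in this setting.

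For the compactness of $T_\Sigma$ I plan to invoke the Riesz--Fr\'echet--Kolmogorov statement of Theorem~\ref{compactBrezis}. The key rewriting is
\[
T_\Sigma f(s) \;=\; \int_\theta^\eta \bigl(\Phi\ast\tilde f\bigr)\!\left(\tfrac{s}{z}\right)\diff\mu(z),\qquad s\in(b_\theta,\Sigma),
\]
where $\tilde f$ denotes the extension of $f$ by zero outside $[b_\theta,\Sigma]$; this identifies $T_\Sigma f$ as a $\mu$-average of rescaled convolutions. The translation property~\eqref{translation} for the image of the unit ball of $E_\Sigma$ then follows from the classical continuity of translation in $\Leb^1(\R_+)$ applied to $\Phi$ alone, via the identity $\tau_h(\Phi\ast\tilde f)=(\tau_h\Phi)\ast\tilde f$ and the harmless change of variable $u=s/z$ which only costs a Jacobian bounded in $[\theta,\eta]$. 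The mass-non-concentration property~\eqref{massebord} follows from a Fubini argument together with the equi-integrability of $\Phi\in\Leb^1(\R_+)$, which controls $\int_B|\Phi\ast\tilde f|$ uniformly in $f$ as $|B|\to 0$; the fact that $(b_\theta,\Sigma)$ is a bounded interval makes the compact-set requirement trivial.

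The step I expect to be the main obstacle is irreducibility, especially when $\mu$ is purely atomic so that the only continuous parameter driving the support of $T_\Sigma f$ is the convolution variable $\sigma\in\supp\Phi$. The strategy is to show that if $\omega\subset(b_\theta,\Sigma)$ is measurable with positive measure and the closed ideal $I_\omega=\{f\in E_\Sigma:\supp f\subset\omega\}$ is $T_\Sigma$-invariant, then $\omega$ coincides with $(b_\theta,\Sigma)$ up to a negligible set. Starting from $\mathds{1}_{\omega'}$ for any small $\omega'\subset\omega$ of positive measure, the support of $T_\Sigma\mathds{1}_{\omega'}$ contains all points of the form $z(a+\sigma)$ with $a\in\omega'$, $z\in\supp\mu$ and $\sigma\in[b,+\infty)\cap\supp\Phi$; since this last set has positive length, already a single application of $T_\Sigma$ produces a support which is a union of non-trivial intervals, and finitely many iterations, using the dilation factors $z\in[\theta,\eta]$ and the free shift $\sigma\geq b$, are enough to cover $(b_\theta,\Sigma)$, contradicting invariance of $I_\omega$ unless $\omega$ is essentially the whole interval. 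With these four properties in hand, Theorem~\ref{thmKRdP} produces $\rho_\Sigma\geq 0$ as an eigenvalue with nonzero eigenvector $f_\Sigma\in\Leb^1_+(E_\Sigma)$, and uniqueness after the normalization $\|f_\Sigma\|_{E_\Sigma}=1$ follows from the simplicity of the principal eigenvalue recalled above.
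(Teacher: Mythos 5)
Your strategy coincides with the paper's: invoke the Krein--Rutman/De Pagter statement (Theorem~\ref{thmKRdP}) on $\Leb^1((b_\theta,\Sigma),s^l\diff s)$, prove compactness through the Riesz--Fr\'echet--Kolmogorov criterion (Theorem~\ref{compactBrezis}) using the rescaled-convolution representation of $T_\Sigma$, and establish irreducibility by support propagation. Two points deserve tightening, though neither breaks the argument.

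First, your compactness discussion treats the weight $s^l$ too lightly, especially near $s=b_\theta$ when $b=0$ (so $b_\theta=0$) and $l<-1$, where $s^l$ is not even locally integrable. The naive ``equi-integrability of $\Phi$ controls $\int_B|\Phi*\tilde f|$'' does not immediately bound $\int_{b_\theta}^\alpha|T_\Sigma f|\,s^l\diff s$; one also needs the weight-absorption inequality $(a+\sigma)^l\leq a^l$ after the change of variable $u=s/z$, plus the structural fact that the lower end of the $\sigma$-integral sits below $b$ precisely because $b_\theta/z - a \leq b$ for $a\geq b_\theta$, $z\geq\theta$. This is what produces the clean bound $\theta^l\bigl(1-\Psi(\tfrac{\alpha-b_\theta}{\theta})\bigr)$ in the paper. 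Your argument should make explicit that the small-translate set maps, under $u=s/z$ and $\sigma=u-a$, to a $\sigma$-window that shrinks to $\{b\}$ as $\alpha\to b_\theta$, and that the extra weight is paid by $f$ rather than by $\Phi$.

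Second, in the irreducibility step, your phrase ``finitely many iterations are enough to cover $(b_\theta,\Sigma)$'' is not quite how things close. Starting from $s_0:=\inf\supp f_\omega>b_\theta$, a single application of $T_\Sigma$ already has support containing the entire interval $[\theta(b+s_0),\Sigma]$ (choosing $z$ near $\theta$ and $\sigma$ free in $[b,\infty)$ truncated at $\Sigma$), and since $\theta(b+s_0)<s_0$ whenever $s_0>b_\theta$, invariance of the ideal is contradicted at once. If instead you phrase it as iterating, the infimum of the support follows the contraction $s\mapsto\theta(b+s)$ whose fixed point is $b_\theta$, and that convergence is asymptotic, not finite-step; you still conclude, but the contradiction is cleaner in one shot. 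Also, since $\mathds{1}_{\omega'}$ may fail to lie in $\Leb^1(s^l\diff s)$ when $l<-1$ and $b_\theta=0$, it is safer to test against $s^{-l}\mathds{1}_{\omega'}$, as the paper does.
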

Applying Theorem~\ref{compactBrezis}, to $\Omega=(b_\theta,\Sigma)$ and the family
\[\mathcal{F}=\{T_\Sigma f, f\in \Leb^1((b_\theta,\Sigma),s^l\diff s), \|f\|_{\Leb^1((b_\theta,\Sigma),s^l\diff s)}\leq 1\},\]
which is bounded in $\Leb^1((b_\theta,\Sigma),s^l\diff s),$ as already shown in the proof of Lemma~\ref{stabiliteTL1poids}, we prove the following Lemma.

\begin{lem} \label{compactTSigma}
Let $l$ be a nonpositive number. Under the hypotheses of Proposition~\ref{existencefSigma}, for all $$\Sigma>\max(\frac{b}{1-\theta},1),$$ the set $\mathcal{F}$ is relatively compact.
\end{lem}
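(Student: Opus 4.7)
The plan is to apply Theorem~\ref{compactBrezis} on the open interval $\Omega=(b_\theta,\Sigma)$ with the measure $\nu=s^l\diff s$. Boundedness of $\mathcal{F}$ is already a consequence of Lemma~\ref{lemmeTTSigma} combined with Lemma~\ref{stabiliteTL1poids}, so the task reduces to verifying the translation property~\eqref{translation} and the absence of mass near the boundary~\eqref{massebord}.

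For~\eqref{translation}, I would exploit a convolution structure. Extending any $f\in\mathcal{F}$ by zero to a function $\tilde f\in\Leb^1(\R_+)$ and using $\supp\Phi=[b,\infty)$, the formula for $T_\Sigma$ rewrites as
\[T_\Sigma f(s)=\int_\theta^\eta (\Phi\ast\tilde f)\!\left(\f s z\right)\dmz,\]
where $\ast$ denotes the standard convolution on $\R$. Since $\Phi\in\Leb^1(\R_+)$, Young's inequality and the $\Leb^1$-continuity of translations yield
\[\|\tau_h(\Phi\ast\tilde f)-\Phi\ast\tilde f\|_{\Leb^1(\R)}\leq\|\tau_h\Phi-\Phi\|_{\Leb^1(\R)}\,\|\tilde f\|_{\Leb^1(\R)},\]
whose right-hand side tends to zero as $h\to 0$ uniformly over $\mathcal{F}$ (the control of $\|\tilde f\|_{\Leb^1}$ by the weighted norm follows from $s^l\geq\Sigma^l$ on $(b_\theta,\Sigma)$). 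A change of variable $y=s/z$ together with the boundedness of $s^l$ on any $\omega\ssubset(b_\theta,\Sigma)$ then transfers this estimate to equicontinuity of translations for $T_\Sigma f$ in $\Leb^1(\omega,s^l\diff s)$.

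For~\eqref{massebord}, I would treat the two endpoints separately. Near the right endpoint, adapting the Fubini and substitution scheme of Lemma~\ref{stabiliteTL1poids} produces an upper bound of the form
\[\int_{\Sigma-\delta}^{\Sigma}|T_\Sigma f(s)|\,s^l\diff s\leq C\,\|f\|_{\Leb^1((b_\theta,\Sigma),s^l\diff s)}\sup_{z\in[\theta,\eta]}\int_{A_{z,\delta}}\Phi(\sigma)\diff\sigma,\]
where each set $A_{z,\delta}$ has Lebesgue measure at most $\delta/\theta$; the absolute continuity of the Lebesgue integral for $\Phi\in\Leb^1(\R_+)$ then makes this quantity arbitrarily small as $\delta\to 0$, uniformly over $\mathcal{F}$. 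Near the left endpoint, the constraint $\supp\Phi=[b,\infty)$ shrinks the effective $a$-domain to $[b_\theta,s/z-b]$, which collapses as $s\searrow b_\theta$ with the admissible $z$-range contracting to $\{\theta\}$, so the same absolute continuity argument gives uniform smallness there. The main technical annoyance I expect is the careful bookkeeping of the weight $s^l$ in the limit case $b=0$ (hence $b_\theta=0$), where $s^l$ may be singular at the origin; the implicit restriction $l>-1$ needed for local integrability on $(0,\Sigma)$ must be used, but no idea beyond absolute continuity is required to push the arguments through.
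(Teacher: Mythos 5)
Your proposal is correct and follows the same top-level strategy as the paper (verify \eqref{translation} and \eqref{massebord} and invoke Theorem~\ref{compactBrezis}), but for the translation-continuity part you take a genuinely more economical route. Where the paper carves $[\alpha,\beta]$ into five subintervals $(A)$--$(E)$ according to the piecewise definition of $T_\Sigma$, and further splits $(C)$ into $(C1)$--$(C5)$, you observe once and for all that, after zero-extension of $f$ to $\tilde f$, one has $T_\Sigma f(s)=\int_\theta^\eta(\Phi*\tilde f)(s/z)\dmz$; then $\tau_{h}T_\Sigma f - T_\Sigma f$ is governed by $\tau_{h/z}(\Phi*\tilde f)-\Phi*\tilde f=(\tau_{h/z}\Phi-\Phi)*\tilde f$, and Young's inequality yields the same final bound $\sup_{z\in[\theta,\eta]}\|\tau_{h/z}\Phi-\Phi\|_{\Leb^1}$ in one line. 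The uniformity over $\mathcal F$ via $s^l\geq\Sigma^l$ is the right observation. Your treatment of \eqref{massebord} is essentially identical to the paper's. Both arguments are correct; yours buys brevity, the paper's buys explicitness without requiring one to notice the convolution structure (which the paper only exploits later, in \eqref{equationconvolutionfrag}).

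Two small inaccuracies, neither fatal. First, your worry about an ``implicit restriction $l>-1$'' when $b=0$ is unfounded: $\Leb^1((0,\Sigma),s^l\diff s)$ is a perfectly good Banach space for every $l\leq 0$, the compactness criterion only needs $s^l$ bounded above and below on each $\omega\ssubset(0,\Sigma)$ (automatic), and the boundary estimate $\int_0^\alpha|T_\Sigma f|\,s^l\diff s \leq C\,(1-\Psi(\alpha/\theta))$ holds for all $l\leq 0$; local integrability of the weight itself near $0$ is never used. Second, near the left endpoint you attribute the smallness to the ``admissible $z$-range contracting to $\{\theta\}$'', but $\mu$ may carry a Dirac mass at $\theta$ (equal mitosis is $\mu=2\delta_{1/2}$), so $\mu$-measure of the effective $z$-range does not shrink. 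What actually shrinks is the $\sigma$-interval in $\int\Phi(\sigma)\,\diff\sigma$ after the substitution $\sigma=s/z-a$: its length tends to $0$ as $s\searrow b_\theta$ (and when $b>0$ it exits $\supp\Phi=[b,\infty)$ entirely once $\alpha$ is close enough to $b_\theta$), and this is where the absolute continuity of $\int\Phi$ is applied. With these clarifications, the argument is sound.
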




\begin{proof}[Proof of Lemma~\ref{compactTSigma}]
The set $\mathcal{F}$ is bounded due to the continuity of $T$ proven in Lemma~\ref{stabiliteTL1poids}. First, we show that \eqref{translation} is satisfied. Any compact set in $(b_\theta, \Sigma)$ is included in a segment $[\alpha, \beta].$ Without loss of generality, we take $b_\theta<\alpha<\theta\Sigma,\ \eta\Sigma<\beta<\Sigma.$ It is sufficient to treat the case $h$ positive, so let $0\leq h<\min(\theta\Sigma-\alpha, \Sigma-\beta, \Sigma(\eta-\theta)).$ Since $T_\Sigma f$ is piecewise defined, we have to separate the integral on $[\alpha,\beta]$ into several parts, depending on the interval $s$ and $s+h$ belong to, and we obtain
\begin{align*}
&\int_\alpha^\beta \left| T_\Sigma f(s+h)-T_\Sigma f(s)\right|s^l\diff s \\
&\leq \int_\alpha^{\theta\Sigma-h} \left| T_\Sigma f(s+h)-T_\Sigma f(s)\right| s^l\diff s =:(A)\\
&\quad+\int_{\theta\Sigma-h}^{\theta\Sigma} \left| T_\Sigma f(s+h)-T_\Sigma f(s)\right| s^l\diff s =:(B)\\
&\qquad +\int_{\theta\Sigma}^{\eta\Sigma-h} \left| T_\Sigma f(s+h)-T_\Sigma f(s)\right| s^l\diff s =:(C) \\
&\qquad \quad +\int^{\eta\Sigma}_{\eta\Sigma-h} \left| T_\Sigma f(s+h)-T_\Sigma f(s)\right| s^l\diff s =:(D)\\
&\qquad \qquad +\int_{\eta\Sigma}^\beta \left| T_\Sigma f(s+h)-T_\Sigma f(s)\right| s^l\diff s =:(E).
\end{align*}
since for $(A),\ (C)$ and $(E),$ $T_\Sigma f$ and $\tau_h T_\Sigma f$ have the same expression, the same kind of calculations apply, so we only treat $(C),$ which has the most complicated expression. 

\begin{align*}
(C)&= \int_{\theta\Sigma}^{\eta\Sigma-h} \left| T_\Sigma f(s+h)-T_\Sigma f(s)\right| s^l\diff s \\
& \leq \int_{\theta\Sigma}^{\eta\Sigma-h} \left| \int_\theta^{\f{s+h}{\Sigma}}\int_{b_\theta}^\Sigma \Phi(\f{s+h}{z}-a)f(a)\diff a \dmz - \int_\theta^{\f{s}{\Sigma}}\int_{b_\theta}^\Sigma \Phi(\f{s}{z}-a)f(a)\diff a \dmz \right| s^l\diff s \\
& \quad + \int_{\theta\Sigma}^{\eta\Sigma-h} \left| \int^{\eta}_{\f{s+h}{\Sigma}}\int_{b_\theta}^{\f{s+h}{z}} \Phi(\f{s+h}{z}-a)f(a)\diff a \dmz - \int^\eta_{\f{s}{\Sigma}}\int_{b_\theta}^{\f{s}{z}} \Phi(\f{s}{z}-a)f(a)\diff a \dmz \right| s^l\diff s \\
& \leq \int_{\theta\Sigma}^{\eta\Sigma-h} s^l\int_\theta^{\f{s}{\Sigma}} \int_{b_\theta}^\Sigma \left| \Phi(\f{s+h}{z}-a)-\Phi(\f{s}{z}-a)\right| |f(a)|\diff a \dmz \diff s =:(C1)\\
& \quad + \int_{\theta\Sigma}^{\eta\Sigma-h} s^l\int_{\f{s}{\Sigma}}^{\f{s+h}{\Sigma}} \int_{b_\theta}^\Sigma \Phi(\f{s+h}{z}-a) |f(a)| \diff a \dmz \diff s =:(C2)\\
& \qquad + \int_{\theta\Sigma}^{\eta\Sigma-h} s^l\int^{\eta}_{\f{s+h}{\Sigma}}\int_{b_\theta}^{\f{s}{z}} \left| \Phi(\f{s+h}{z}-a)-\Phi(\f{s}{z}-a)\right||f(a)|\diff a \dmz \diff s =:(C3)\\
& \quad \qquad  + \int_{\theta\Sigma}^{\eta\Sigma-h} s^l\int_{\f{s+h}{\Sigma}}^{\eta} \int_{\f{s}{z}}^{\f{s+h}{z}}\Phi(\f{s+h}{z}-a) |f(a)| \diff a \dmz \diff s =:(C4)\\
& \qquad \qquad + \int_{\theta\Sigma}^{\eta\Sigma-h} s^l\int_{\f{s}{\Sigma}}^{\f{s+h}{\Sigma}} \int_{b_\theta}^{\f{s}{z}}\Phi(\f{s+h}{z}-a) |f(a)| \diff a \dmz \diff s =:(C5)
\end{align*}

The integrals $(C1)$ and $(C3)$  are dealt with in the same way, and we have the following estimate

\begin{align*}
(C1)&=\int_{\theta\Sigma}^{\eta\Sigma-h} s^l\int_\theta^{\f{s}{\Sigma}} \int_{b_\theta}^\Sigma \left| \Phi(\f{s+h}{z}-a)-\Phi(\f{s}{z}-a)\right| |f(a)|\diff a \dmz \diff s  \\
	&=\int_\theta^{\eta-\frac{h}{\Sigma}}\int_{b_\theta}^\Sigma |f(a)|\int_{z\Sigma}^{\eta\Sigma-h} \left| \Phi(\f{s+h}{z}-a)-\Phi(\f{s}{z}-a)\right|s^l\diff s \diff a \dmz  \\
	&=\int_\theta^{\eta-\frac{h}{\Sigma}}z^{l+1}\int_{b_\theta}^\Sigma |f(a)|\int_{\Sigma-a}^{\frac{\eta\Sigma-h}{z}-a}|\tau_{\frac{h}{z}}\Phi(\sigma)-\Phi(\sigma)|(a+\sigma)^l\diff \sigma \diff a \dmz \\
	&=\int_\theta^{\eta-\frac{h}{\Sigma}}z^{l+1}\int_{b_\theta}^\Sigma |f(a)|a^l\int_{\Sigma-a}^{\frac{\eta\Sigma-h}{z}-a}|\tau_{\frac{h}{z}}\Phi(\sigma)-\Phi(\sigma)|\diff \sigma \diff a \dmz \\
	&\leq\theta^l\sup_{\varepsilon \in [\theta,\eta]} \|\tau_{\frac{h}{\varepsilon}}\Phi-\Phi\|_{\Leb^1(\R_+)}.
\end{align*}
These integrals are as small as needed when $h$ is small enough, due to the continuity of the translation in $\Leb^1(\R_+).$ For $(C2)$ one has 
\begin{align*}
(C2)&=\int_{\theta\Sigma}^{\eta\Sigma-h}s^l \int_{\f{s}{\Sigma}}^{\f{s+h}{\Sigma}} \int_{b_\theta}^\Sigma \Phi(\f{s+h}{z}-a) |f(a)| \diff a \dmz \diff s \\
	&=\int_\theta^{\theta+\frac{h}{\Sigma}}\int_{b_\theta}^\Sigma |f(a)| \int_{\theta\Sigma}^{z\Sigma}\Phi(\f{s+h}{z}-a) s^l\diff s \diff a \dmz \\
	&\quad +  \int_{\theta+\frac{h}{\Sigma}}^{\eta-\frac{h}{\Sigma}}\int_{b_\theta}^\Sigma |f(a)| \int_{z\Sigma-h}^{z\Sigma}\Phi(\f{s+h}{z}-a) s^l\diff s \diff a \dmz \\
	&\qquad +\int_{\eta-\frac{h}{\Sigma}}^\eta\int_{b_\theta}^\Sigma |f(a)| \int_{z\Sigma-h}^{\eta\Sigma-h}\Phi(\f{s+h}{z}-a) s^l\diff s\diff a \dmz \\
	&\leq\int_{\theta}^{\eta}\int_{b_\theta}^\Sigma |f(a)| \int_{z\Sigma-h}^{z\Sigma}\Phi(\f{s+h}{z}-a) s^l\diff s \diff a \dmz \\
	&=\int_{\theta}^{\eta}z^{l+1}\int_{b_\theta}^\Sigma |f(a)| \int_{\Sigma-\frac{h}{z}-a}^{\Sigma-a}\Phi(\sigma+\frac{h}{z}) (a+\sigma)^l\diff \sigma \diff a \dmz \\
	&\leq\int_{\theta}^{\eta}z^{l+1}\int_{b_\theta}^\Sigma |f(a)|a^l\int_{\Sigma-\frac{h}{z}-a}^{\Sigma-a}\Phi(\sigma+\frac{h}{z})\diff\sigma \diff a  \dmz \\
	&\leq \theta^l\sup_{|I|=\frac{h}{\theta}}\int_I \Phi(a)\diff a
\end{align*}
which is small when $h$ is small since $\Phi$ is a probability density. To deal with $(C4),$ we use Fubini's theorem and some changes of variables to obtain
\begin{align*}
(C4)&= \int_{\theta\Sigma}^{\eta\Sigma-h}s^l \int_{\f{s+h}{\Sigma}}^{\eta} \int_{\f{s}{z}}^{\f{s+h}{z}}\Phi(\f{s+h}{z}-a) |f(a)| \diff a \dmz \diff s \\
	&=\int_{\theta+\frac{h}{\Sigma}}^\eta\int_{\theta\Sigma}^{z\Sigma-h}s^l\int_{\frac{s}{z}}^{\frac{s+h}{z}} \Phi(\f{s+h}{z}-a) |f(a)| \diff a \diff s \dmz \\
	&=\int_{\theta+\frac{h}{\Sigma}}^\eta\int_{\theta\Sigma}^{z\Sigma-h}s^l\int_{-\frac{h}{z}}^0 \Phi(\f{h}{z}+a') |f(\frac{s}{z}-a')| \diff a' \diff s \dmz \\
	&=\int_{\theta+\frac{h}{\Sigma}}^\eta\int_{-\frac{h}{z}}^0 \Phi(\f{h}{z}+a')\int_{\theta\Sigma}^{z\Sigma-h}  |f(\frac{s}{z}-a')| s^l\diff s\diff a'  \dmz \\
	&=\int_{\theta+\frac{h}{\Sigma}}^\eta z^{l+1}\int_{-\frac{h}{z}}^0 \Phi(\f{h}{z}+a')\int_{\f{\theta\Sigma}{z}-a'}^{\Sigma-\frac{h}{z}-a'}  |f(\sigma)| (\sigma+a')^l\diff \sigma\diff a'  \dmz \\
	&\leq \int_{\theta+\frac{h}{\Sigma}}^\eta z^{l+1}\int_{-\frac{h}{z}}^0\Phi(\f{h}{z}+a')\diff a'  \dmz \\
	&\leq \theta^l\left(1-\Psi(\frac{h}{\theta})\right),
\end{align*}
and the continuity of $\Psi$ at $0$ provides the wanted property. Finally, noticing that $(C5)\leq (C2)$ because the integrand are nonnegative, we obtain the desired control on the integral $(C).$ Now for the integral $(B),$ which is dealt with as would be $(D),$ we write
\begin{align*}
(B)&=\int_{\theta\Sigma-h}^{\theta\Sigma}\left|\int_\theta^{\frac{s+h}{\Sigma}}\int_{b_\theta}^\Sigma\Phi(\frac{s+h}{z}-a)f(a)\diff a \dmz + \int_{\frac{s+h}{\Sigma}}^\eta\int_{b_\theta}^{\frac{s+h}{z}}\Phi(\frac{s+h}{z}-a)f(a)\diff a \dmz \right.\\
&\quad \left.-\int_\theta^\eta \int_{b_\theta}^\frac{s}{z}\Phi(\frac{s}{z}-a)f(a)\diff a \dmz\right|s^l\diff s \\
& \leq \int_\theta^\eta\int_{b_\theta}^{\Sigma}|f(a)|\int_{\theta\Sigma-h}^{\theta\Sigma}\left[\Phi(\frac{s+h}{z}-a)+\Phi(\frac{s}{z}-a)\right]s^l\diff s \diff a \dmz\\
& \leq \int_\theta^\eta z^{l+1}\int_{b_\theta}^{\Sigma}|f(a)|a^l\int_{\frac{\theta\Sigma-h}{z}-a}^{\frac{\theta\Sigma}{z}-a}\left[\Phi(\sigma+\frac{h}{z})+\Phi(\sigma)\right]\diff \sigma \diff a \dmz\\
& \leq 2\,\theta^l\sup_{|I|=\frac{h}{\theta}}\int_I \Phi(a)\diff a
\end{align*}
and again the last term vanishes as $h$ vanishes.
We now show that there is no mass accumulation at the boundary of the domain $(b_\theta,\Sigma),$ \emph{i.e.} that \eqref{massebord} holds true. For $\Sigma > \frac{1}{1-\theta}b,$ we haves $b_\theta < \theta \Sigma$ and we can choose $\alpha < \theta \Sigma,$ so that for all $s\in (b_\theta,\alpha),\ \frac{s}{\Sigma} < \theta.$ With the expression of $T_\Sigma f(s),$ we have
\begin{align}
		& \int_{b_\theta}^\alpha |T_\Sigma f(s)|s^l\diff s \nonumber\\
		& \leq \int_{b_\theta}^\alpha s^l\int^\eta_\theta \int_{b_\theta}^{\frac{s}{z}}\Phi(\f{s}{z}-a) |f(a)|\diff a \dmz  \diff s \nonumber\\
		& \leq \int_\theta^{\eta}\int_{b_\theta}^{\frac{b_\theta}{z}} |f(a)|\int_{b_\theta}^\alpha \Phi(\f{s}{z}-a) s^l \diff s\diff a \dmz + \int_\theta^\eta\int_{\f{b_\theta}{z}}^{\frac{\alpha}{z}}|f(a)|\int_{za}^\alpha \Phi(\f{s}{z}-a) s^l \diff s \diff a \dmz \nonumber\\
		& \leq \int_\theta^{\eta}z^{l+1}\int_{b_\theta}^{\frac{b_\theta}{z}}\left(\Psi(\frac{b_\theta}{z}-a)-\Psi(\frac{\alpha}{z}-a)\right) |f(a)|a^l\diff a z\dmz + \int_\theta^{\eta}\int_{\f{b_\theta}{z}}^{\frac{\alpha}{z}}\left(1-\Psi(\frac{\alpha}{z}-a)\right) |f(a)|a^l\diff a z\dmz \nonumber\\
		& \leq \label{bordgauche} \theta^l\left(1-\Psi(\frac{\alpha-b_\theta}{\theta})\right),
\end{align}
since for $b_\theta < s \leq \frac{b_\theta}{z}$ we have $\frac{b_\theta}{z}-a\leq b$ and so $\Psi(\frac{b_\theta}{z}-a)=1.$ Taking $\alpha$ as closed to $b_\theta$ as needed, we obtain the first estimate of \eqref{massebord}.

As done before, we choose a $\beta$ to obtain a simpler expression of $T_\Sigma,$ namely $\beta > \eta\Sigma.$ Then, one has
\begin{align}
\int_\beta^\Sigma |T_\Sigma f(s)|s^l\diff s & \leq \int_\theta^\eta\int_{b_\theta}^\Sigma|f(a)|\int_{\beta}^{\Sigma}\Phi(\frac{s}{z}-a)s^l\diff s \diff a \dmz \nonumber\\
& \leq \int_\theta^\eta z^{l+1}\int_{b_\theta}^\Sigma|f(a)|a^l\int_{\frac{\beta}{z}-a}^{\frac{\Sigma}{z}-a}\Phi(\sigma)\diff \sigma \diff a \dmz \nonumber\\
&\leq \label{borddroit} \theta^l\sup_{|I|=\frac{\Sigma-\beta}{\theta}}\int_I \Phi(a)\diff a,
\end{align}
which is small when $\Sigma-\beta$ is small.

\medskip

We have checked the assumptions of Theorem~\ref{compactBrezis} for the family $\mathcal{F},$ so it is relatively compact.
\end{proof}

To prove the irreducibility of the operator $T_\Sigma,$ it is useful to notice that $T_\Sigma$ can be expressed differently after switching the two integrals. One has
\[T_\Sigma f(s)  = \left\{
\begin{array}{l}
\dis \int_{b_\theta}^{\f{s}{\eta}} f(a)\int_\theta^\eta\Phi(\f{s}{z}-a)\dmz \diff a + \int_{\f{s}{\eta}}^{\f{s}{\theta}} f(a)\int_\theta^{\f{s}{a}}\Phi(\f{s}{z}-a)\dmz \diff a,\qquad b_\theta \leq s < \theta \Sigma, \vspace{2mm}\\
\dis \int_{b_\theta}^{\f{s}{\eta}} f(a)\int_\theta^\eta\Phi(\f{s}{z}-a)\dmz \diff a + \int_{\f{s}{\eta}}^\Sigma f(a)\int_\theta^{\f{s}{a}}\Phi(\f{s}{z}-a)\dmz \diff a,\qquad \theta \Sigma \leq s \leq \eta \Sigma, \vspace{2mm}\\
\dis \int_{b_\theta}^\Sigma f(a)\int_\theta^\eta \Phi(\f{s}{z}-a)\dmz \diff a,\qquad \eta \Sigma < s \leq \Sigma. \end{array} \right.\]

\begin{lem} \label{irredTS}
Let $l$ be a nonpositive number. Under the hypotheses of Proposition~\ref{existencefSigma}, for all $\Sigma>\f{1}{1-\theta}b$, the operator $T_\Sigma : \Leb^1((b_\theta,\Sigma),s^l\diff s) \rightarrow \Leb^1((b_\theta,\Sigma),s^l\diff s)$ is irreducible.
\end{lem}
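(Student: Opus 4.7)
The plan is to identify every closed ideal of $\Leb^1((b_\theta,\Sigma),s^l\diff s)$ with a measurable subset $\omega\subset(b_\theta,\Sigma)$ via $I_\omega:=\{f:\supp f\subset\omega\}$, and show that any nonzero $T_\Sigma$-invariant ideal of this form must be the whole space. Given a nonzero $f\geq 0$ in $I_\omega$, I would consider the iterates $T_\Sigma^n f$, all of which still lie in $I_\omega$ by invariance, and track $\alpha_n:=\inf\supp T_\Sigma^n f\geq b_\theta$.

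The heart of the argument is the following support-spreading claim: for any $g\in \Leb^1_+$, $g\neq 0$, with $\alpha:=\inf\supp g$, one has $\supp T_\Sigma g\supset(\theta(\alpha+b),\Sigma]$ up to a null set. To see this, fix $s_0\in(\theta(\alpha+b),\Sigma]$ and $\gamma>0$ small, and apply Fubini to $\int_{s_0-\gamma}^{s_0+\gamma} T_\Sigma g(s)\diff s$. Substituting $u=s/z-a$ in the inner $s$-integral converts the contribution at each $(z,a)$ into $z\bigl[\Psi(u_1(z,a))-\Psi(u_2(z,a))\bigr]$ with $u_1<u_2$ lying in a window of width $2\gamma/z$ about $s_0/z-a$. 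Since $\theta\in\supp\mu$, every strip $[\theta,\theta+\varepsilon]$ carries positive $\mu$-mass; since $\alpha=\inf\supp g$, there exist $\delta>0$ and a positive-measure set $A\subset(\alpha,\alpha+\varepsilon)$ on which $g\geq\delta$. For $(z,a)\in[\theta,\theta+\varepsilon]\times A$ and $\gamma,\varepsilon$ small enough (using $s_0>\theta(\alpha+b)$), the $u$-window is contained in an open subinterval of $(b,\infty)$. The hypothesis $\supp\Phi=[b,\infty)$ forces $\int_c^d\Phi>0$ on every such subinterval, so $\Psi$ strictly decreases across the window and the $\Psi$-difference is strictly positive. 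Integrating over the positive-$(\mu\otimes\diff a)$-measure set of admissible $(z,a)$ yields $\int_{s_0-\gamma}^{s_0+\gamma} T_\Sigma g>0$, whence $s_0\in\supp T_\Sigma g$.

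Applied iteratively, the claim gives $\alpha_{n+1}\leq\theta(\alpha_n+b)$. The map $\alpha\mapsto\theta(\alpha+b)$ is a contraction with Lipschitz constant $\theta<1$ and unique fixed point $b_\theta=\theta b/(1-\theta)$, so $\alpha_n-b_\theta\leq\theta^n(\alpha_0-b_\theta)\to 0$. Since $T_\Sigma^{n+1}f\in I_\omega$ and $\supp T_\Sigma^{n+1}f\supset(\theta(\alpha_n+b),\Sigma]$, taking the union over $n$ gives
\[
\omega\supset\bigcup_{n\geq 0}\bigl(\theta(\alpha_n+b),\Sigma\bigr]=(b_\theta,\Sigma]
\]
up to a null set, so $I_\omega$ is the whole space and $T_\Sigma$ is irreducible.

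The main obstacle is the spreading claim. A naive pointwise argument on $\Phi(s/z-a)>0$ can fail, because $\supp\Phi=[b,\infty)$ does not preclude $\Phi$ from vanishing on positive-measure subsets of $[b,\infty)$. The trick is to integrate in $s$ first, turning the contribution into a $\Psi$-difference that relies only on the strict monotonicity of $\Psi$ on open subintervals of $[b,\infty)$ -- a property equivalent to $\supp\Phi=[b,\infty)$. The hypothesis $\Sigma>b/(1-\theta)$ is used to handle the upper end $s_0$ close to $\Sigma$: it guarantees $\Sigma/z-a>b$ for $z$ near $\theta$ and any $a\in[b_\theta,\Sigma]$, so the spreading claim indeed reaches all the way up to $s_0=\Sigma$.
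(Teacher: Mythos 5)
Your proof is correct and follows the same overall strategy as the paper: identify the invariant ideal with a set $\omega$, pick a representative $f_\omega$, and use a support-spreading estimate for $T_\Sigma$ plus the constraint $\theta(\inf\omega+b)\geq \inf\omega$ to pin down $\inf\omega=b_\theta$. The main structural difference is that you iterate $T_\Sigma$ and run a contraction argument on $\alpha_n=\inf\supp T_\Sigma^n f$, whereas the paper uses a single application of $T_\Sigma$: since $T_\Sigma f_\omega\in J$, one has $\supp T_\Sigma f_\omega\subset\omega$, and combining this with the spreading claim $[\theta(b+s_0),\Sigma]\subset\supp T_\Sigma f_\omega$ gives $s_0\leq\theta(b+s_0)$ directly, hence $s_0\leq b_\theta$; with $s_0\geq b_\theta$ this forces $s_0=b_\theta$ in one step. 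Your iteration is not wrong, just unnecessary machinery — the inequality $\alpha\leq\theta(\alpha+b)$ on $[b_\theta,\infty)$ already has $b_\theta$ as its only solution, so the "contraction" step collapses to a single inequality. The second difference is in how you justify the positivity of the spread: you integrate over $s$ first to turn $\Phi(\cdot/z-a)$ into a $\Psi$-difference across a window in $\supp\Phi$, relying on the strict monotonicity of $\Psi$ there. That is a clean way to get a measure-theoretic conclusion about $\supp T_\Sigma g$ and avoids any pointwise reading of $\Phi>0$. The paper's argument is written more tersely — it integrates over $(z,a)$ but does not spell out why that integral is positive under only $\supp\Phi=[b,\infty)$ — so your $\Psi$-packaging is a legitimate clarification, though I would temper the claim that the paper's argument "can fail": read charitably, the paper's integral over a positive-$(\mu\otimes\diff a)$-measure rectangle whose image under $(z,a)\mapsto s/z-a$ fills an interval of $[b,\infty)$ is positive for the same reason your $\Psi$-difference is. Finally, your remark on the role of $\Sigma>b/(1-\theta)$ is slightly off: that hypothesis is there to ensure $\theta\Sigma>b_\theta$ so the pieces in the definition of $T_\Sigma$ are well-ordered and the domain $(b_\theta,\Sigma)$ is nontrivial; the claim $\Sigma/\theta-a>b$ for $a\leq\Sigma$ only requires the weaker $\Sigma>b_\theta$.
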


\begin{proof}
Let $J\neq \{0\}$ be a $T_\Sigma$-invariant ideal in $\Leb^1((b_\theta,\Sigma),s^l\diff s)$. There exists a subset $\omega \subset (b_\theta,\Sigma)$ such that $J=\{f \in \Leb^1((b_\theta,\Sigma),s^l\diff s)\ |\ \supp f \subset \omega \}.$ Let $f_\omega:=s^{-l}\mathds{1}_\omega$ and $s_0:=\inf \supp  f_\omega \geq b_\theta.$ Since $J\neq \{0\}$ (so $s_0<\Sigma)$ and $\theta<\eta,$  one can find $\zeta$ and $\xi$ both positive such that \[T_\Sigma f_\omega(s)\geq \int_{s_0}^{s_0+\zeta}\int_\theta^{\theta+\xi}\Phi(\frac{s}{z}-a)\dmz f_\omega(a)\diff a. \] 
For $s\geq s_0,$ the functions $z\mapsto \frac{s}{z}-a$ and $a\mapsto \frac{s}{z}-a$ are continuous decreasing functions. So, if $s$ is such that $\frac{s}{\theta}-s_0>b,$ then one  can choose $\zeta$ and $\xi$ such that for all $(s,a)\in [s_0,s_0+\zeta]\times [\theta, \theta + \xi],\ \frac{s}{z}-a \in \supp \Phi.$ Additionally, for each $\zeta>0,$ the integral $\int_{s_0}^{s_0+\zeta}f_\omega(a)\diff a$ is positive. We deduce that $[\theta(b+s_0),\Sigma]\subset \supp T_\Sigma f_\omega \subset [s_0,\Sigma],$ so $s_0 \leq \theta(b+s_0),$ which is equivalent to $s_0 \leq b_\theta.$ Finally $b_\theta = s_0,$ so $J=[b_\theta,\Sigma]$ and $T_\Sigma$ is irreducible.
\end{proof}
\begin{proof}[Proof of Proposition~\ref{existencefSigma}]
Lemma \ref{compactTSigma} shows that the set $\mathcal{F}$ is relatively compact in $\Leb^1((b_\theta,\Sigma),s^l\diff s)$, which is exactly saying that $T_\Sigma$ is a compact operator of $\Leb^1((b_\theta,\Sigma),s^l\diff s).$ With Lemma~\ref{irredTS} in addition, we can apply Theorem~\ref{thmKRdP} to the operator $T_\Sigma$ for $\Sigma>\frac{1}{1-\theta}b$ to obtain the existence of a nonnegative function $f_\Sigma \in  \Leb^1((b_\theta,\Sigma),s^l\diff s)$ which is an eigenvector of $T_\Sigma$ associated to the eigenvalue $\rho_\Sigma.$ Since this function is defined on a compact subset of $\R_+,$ it also belongs to $\Leb^1((b_\theta,\Sigma),(s^k+s^l)\diff s)$ for $k<k_0.$

\end{proof}

\subsection{Passing to the limit $\Sigma\to \infty$}

We now want to show that up to a subsequence, $(f_\Sigma)_\Sigma$ converges to a fixed point of $T$. To that end, in the rest of the article we extend the functions defined on $(b_\theta,\Sigma)$ to $\R_+$ by $0$ out of $(b_\theta,\Sigma)$. Then we obtain the following proposition
\begin{prop} \label{fixedpointT}
Under hypotheses~\eqref{massconservation}-\eqref{decayPsi} there exists a nonnegative and normalized fixed point $$f\in \Leb^1(\R_+,(s^k+s^l)\diff s)$$  for all $l\leq0$ and $k<k_0,$ of the operator $T.$ Additionally, $f$ is unique in$ \Leb^1(\R_+)$ and its support is $[b_\theta, \infty).$
\end{prop}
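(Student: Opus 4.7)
The plan is to construct $f$ as the $\Leb^1$-limit of a subsequence of the normalized eigenfunctions $f_\Sigma$ furnished by Proposition \ref{existencefSigma}, each extended by zero outside $(b_\theta,\Sigma)$. The argument proceeds in four steps: convergence $\rho_\Sigma\to 1$; uniform moment bounds on $f_\Sigma$; compactness in $\Leb^1(\R_+)$ and passage to the limit; then determination of the support and uniqueness.

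The upper bound $\rho_\Sigma\leq 1$ is immediate: $T$ preserves the $\Leb^1$-norm of nonnegative functions (a Fubini computation using \eqref{massconservation} and $\|\Phi\|_{\Leb^1}=1$), and $T_\Sigma f \leq Tf$ pointwise whenever $f\geq 0$ is supported in $[b_\theta,\Sigma]$ by Lemma \ref{lemmeTTSigma}. The convergence $\rho_\Sigma\to 1$ follows by writing, after the same Fubini computation,
\[
\rho_\Sigma = \int_{b_\theta}^\Sigma T_\Sigma f_\Sigma(s)\diff s = \int_\theta^\eta z\diff\mu(z) \int_{b_\theta}^\Sigma f_\Sigma(a)\int_{0}^{\Sigma/z - a}\Phi(\sigma)\diff\sigma\diff a,
\]
and observing that the inner $\Phi$-integral converges to $1$ as soon as $f_\Sigma$ does not escape to infinity. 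For the moment bounds, revisiting the calculation of Lemma \ref{stabiliteTL1poids} with the inequality $(a+\sigma)^k \leq a^k + \sigma^k$ for $k\in[0,1]$ (an analogous estimate with an explicit constant handles $k>1$) and using $\int_\theta^\eta z^{k+1}\diff\mu(z) \leq \eta^k$ provided by \eqref{massconservation} and \eqref{probameasure}, one obtains
\[
\rho_\Sigma\|f_\Sigma\|_{\Leb^1(s^k\diff s)} \leq \eta^k\|f_\Sigma\|_{\Leb^1(s^k\diff s)} + \eta^k\|\Phi\|_{\Leb^1(s^k\diff s)}
\]
for every $k\in[0,k_0)$. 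Since $\eta<1$ and $\rho_\Sigma$ can be kept above $\eta^k$ for $\Sigma$ large, this yields $\sup_\Sigma\|f_\Sigma\|_{\Leb^1(s^k\diff s)} < \infty$.

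Compactness of $(f_\Sigma)$ in $\Leb^1(\R_+)$ is obtained through Theorem \ref{compactBrezis} applied on $\Omega=(0,\infty)$. Tightness near $0$ is trivial since $\supp f_\Sigma\subset[b_\theta,\infty)$; tightness at infinity follows from the uniform moment bound by Chebyshev. Translation continuity is established by repeating the estimates of Lemma \ref{compactTSigma}, whose right-hand sides involve only $\Leb^1$-moduli of continuity of $\Phi$ and $\Psi$ and are therefore uniform in $\Sigma$. Extracting a convergent subsequence $f_{\Sigma_n}\to f$, Fatou provides $f\geq 0$, $\|f\|_{\Leb^1}=1$ and $f\in\Leb^1(\R_+,(s^k+s^l)\diff s)$ for every $l\leq 0$ and $k<k_0$. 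Continuity of $T$ on $\Leb^1$ together with the uniform control $\|T f_\Sigma - T_\Sigma f_\Sigma\|_{\Leb^1}\to 0$ allow passing to the limit in $T_{\Sigma_n}f_{\Sigma_n}=\rho_{\Sigma_n}f_{\Sigma_n}$ and conclude $Tf=f$.

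For the support, the inclusion $\supp f \subset [b_\theta,\infty)$ is inherited from $(f_\Sigma)$; conversely, if $s_0 := \inf\supp f > b_\theta$ then the argument at the heart of Lemma \ref{irredTS} gives $\inf\supp Tf \leq \theta(b+s_0) < s_0$, contradicting $Tf=f$, hence $s_0 = b_\theta$, and iterating the same reasoning yields $\supp f = [b_\theta,\infty)$. Uniqueness in $\Leb^1(\R_+)$ is addressed through the Laplace transform announced in the introduction: applying $\mathcal{L}$ to $Tf=f$ yields the self-similar functional equation $\widehat f(\xi) = \int_\theta^\eta z\widehat\Phi(\xi z)\widehat f(\xi z)\diff\mu(z)$, which together with $\widehat f(0)=1$ admits at most one solution among Laplace transforms of nonnegative $\Leb^1$ functions. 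The main obstacle will be the circularity between the moment bounds and the limit $\rho_\Sigma\to 1$: the moment estimate requires $\rho_\Sigma>\eta^k$ to be useful, while the derivation of $\rho_\Sigma\to 1$ through the Fubini identity requires tightness of $(f_\Sigma)$. Breaking this loop will rely on the monotonicity $\Sigma\mapsto\rho_\Sigma$ inherited from $T_{\Sigma_1}\leq T_{\Sigma_2}$ for $\Sigma_1\leq\Sigma_2$, which furnishes an a priori lower bound allowing the bootstrap to start.
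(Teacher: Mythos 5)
Your overall plan --- truncate, extract a limit by compactness, conclude uniqueness through a self-similar Laplace-transform identity --- matches the paper's strategy, but the ``circularity'' you flag at the end is not real, and the escape route through monotonicity of $\Sigma\mapsto\rho_\Sigma$ is both unnecessary and unlikely to launch the bootstrap you describe, since monotonicity only gives $\rho_\Sigma\geq\rho_{\Sigma_0}$, which need not beat the contraction constant $C\eta^k$. Your own Fubini identity already yields an unconditional lower bound: for every $a\leq\Sigma$ and $z\leq\eta$ one has $\Sigma/z-a\geq(1/\eta-1)\Sigma$, so the innermost integral $\int_0^{\Sigma/z-a}\Phi$ is at least $1-\Psi\big((1/\eta-1)\Sigma\big)$, hence $\rho_\Sigma\geq1-\Psi\big((1/\eta-1)\Sigma\big)\to1$ with no tightness input whatsoever. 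The paper proves exactly this two-sided bound on $\rho_\Sigma$ as a stand-alone lemma before touching compactness, and everything downstream feeds off it.

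Two genuine gaps remain. First, the moment estimate obtained by running $T$ through $\Leb^1(\R_+,s^k\diff s)$ is, as written, only valid for $k\in[0,1]$: you invoke $(a+\sigma)^k\leq a^k+\sigma^k$; for $k>1$ the relevant constant is $2^{k-1}$, and the contraction then requires $\rho_\Sigma>2^{k-1}\eta^k$, which fails for $\eta$ close to $1$ and $k$ not small. The fix is to split $(a+\sigma)^k\leq(1+\epsilon)^ka^k+(1+1/\epsilon)^k\sigma^k$ and take $\epsilon$ small enough that $(1+\epsilon)^k\eta^k<1$; once repaired this is actually a shorter route to the uniform moment bound than the paper's, which instead derives the tail estimate $\int_\beta^{r\beta}f_\Sigma\lesssim\Psi\big((1/\eta-1)\beta\big)$ directly from the eigenvalue identity and sums over dyadic shells $r^j\beta$. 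Second, the paper permits $b=0$, in which case $b_\theta=0$ and ``tightness near $0$ is trivial since $\supp f_\Sigma\subset[b_\theta,\infty)$'' says nothing; the paper handles the left endpoint by estimate~\eqref{bordgauche} together with the continuity of $\Psi$ at $0$. A smaller point: that the self-similar Laplace equation ``admits at most one solution'' is not immediate. The paper makes it precise by proving $\overline{\mathrm{L}}(y)\leq\overline{\mathrm{L}}(\eta y)$ for the running supremum of $\mathcal{L}[f-f_1]$ and iterating to $\overline{\mathrm{L}}(y)\leq\overline{\mathrm{L}}(\eta^j y)\to\overline{\mathrm{L}}(0)=0$, and similarly for the infimum.
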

First, we will show that the sequence $(\rho_\Sigma)_\Sigma$ converges to $1$ as $\Sigma \to \infty$.
\begin{lem}
If $(\rho_\Sigma,f_\Sigma)$ is an eigenpair of the operator $T_\Sigma,$ then the following inequality holds true
\beq \label{inegaliterho} 1-\Psi\left((\frac{1}{\eta}-1)\Sigma\right) \leq \rho_\Sigma \leq 1-\Psi\left(\frac{\Sigma}{\theta}-b_\theta\right). \eeq
\end{lem}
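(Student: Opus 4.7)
The plan is to integrate the eigenvalue identity $T_\Sigma f_\Sigma=\rho_\Sigma f_\Sigma$ over $(b_\theta,\Sigma)$ and exploit the mass conservation identity $\int_\theta^\eta z\,\dmz=1$ to isolate $(1-\rho_\Sigma)\|f_\Sigma\|_1$, which will then be sandwiched by monotonicity of $\Psi$.

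Concretely, I would integrate both sides of $T_\Sigma f_\Sigma=\rho_\Sigma f_\Sigma$ against $ds$ on $(b_\theta,\Sigma)$. In the right-hand side, the constraint $b_\theta\leq a\leq\min(s/z,\Sigma)$ in the inner integral of $T_\Sigma f_\Sigma(s)$ translates, for fixed $z\in[\theta,\eta]$ and $a\in[b_\theta,\Sigma]$, into $s\in[za,\Sigma]$ (note $za\leq\eta\Sigma<\Sigma$, so this interval is nonempty). Applying Fubini and then the change of variable $\sigma=s/z-a$, and using $\Phi=-\Psi'$ with $\Psi(0)=1$, the innermost $s$-integral collapses:
\[
\int_{za}^{\Sigma}\Phi\!\left(\tfrac{s}{z}-a\right)\,ds=z\int_0^{\Sigma/z-a}\Phi(\sigma)\,d\sigma=z\bigl(1-\Psi(\Sigma/z-a)\bigr).
\]
Combining with \eqref{massconservation} yields the key identity
\[
(1-\rho_\Sigma)\,\|f_\Sigma\|_{\Leb^1(b_\theta,\Sigma)}
=\int_\theta^{\eta}z\int_{b_\theta}^{\Sigma}\Psi\!\left(\tfrac{\Sigma}{z}-a\right)f_\Sigma(a)\,da\,\dmz.
\]

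To conclude, I use that $\Psi$ is nonincreasing. For the upper bound on $\rho_\Sigma$, the argument $\Sigma/z-a$ is bounded above by $\Sigma/\theta-b_\theta$ on the domain of integration, hence $\Psi(\Sigma/z-a)\geq \Psi(\Sigma/\theta-b_\theta)$; plugging this lower bound into the identity and again invoking $\int_\theta^\eta z\,\dmz=1$ gives $(1-\rho_\Sigma)\|f_\Sigma\|_1\geq \Psi(\Sigma/\theta-b_\theta)\|f_\Sigma\|_1$. For the lower bound on $\rho_\Sigma$, the opposite direction applies: $\Sigma/z-a\geq \Sigma/z-\Sigma\geq(1/\eta-1)\Sigma$, hence $\Psi(\Sigma/z-a)\leq \Psi((1/\eta-1)\Sigma)$, giving $(1-\rho_\Sigma)\|f_\Sigma\|_1\leq\Psi((1/\eta-1)\Sigma)\|f_\Sigma\|_1$. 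Dividing by $\|f_\Sigma\|_{\Leb^1(b_\theta,\Sigma)}$ (which is finite since $s^l\geq \Sigma^l$ on a bounded interval when $l\leq 0$, and positive because $f_\Sigma$ is a nontrivial nonnegative eigenvector from Proposition~\ref{existencefSigma}) produces \eqref{inegaliterho}.

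There is no real obstacle here: Fubini is justified by nonnegativity of the integrand, the change of variable is elementary, and the positivity of the arguments $\Sigma/\theta-b_\theta$ and $(1/\eta-1)\Sigma$ is guaranteed by $\Sigma>\frac{b}{1-\theta}$ (together with $\theta\leq 1$) and by $\eta<1$ from \eqref{probameasure}. The only point worth double-checking carefully is the bookkeeping of the integration limits after swapping $s$ and $a$, since $T_\Sigma$ has a piecewise definition; expressing $T_\Sigma f_\Sigma(s)$ uniformly as the double integral over $\{(a,z):b_\theta\leq a\leq \min(s/z,\Sigma)\}$ makes this a one-line Fubini application.
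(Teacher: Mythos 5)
Your route coincides with the paper's: integrate the eigenvalue identity over $(b_\theta,\Sigma)$, arrive at the exact same identity
\[(1-\rho_\Sigma)\int_{b_\theta}^\Sigma f_\Sigma(s)\,\mathrm{d}s
=\int_\theta^\eta z\int_{b_\theta}^\Sigma \Psi\!\left(\tfrac{\Sigma}{z}-a\right)f_\Sigma(a)\,\mathrm{d}a\,\mathrm{d}\mu(z),\]
and close with the monotonicity of $\Psi$, using \eqref{massconservation} at both ends. The only stylistic difference is that you keep $T_\Sigma f$ in the compact form \eqref{TSigma} and apply Fubini once, whereas the paper expands the outer $s$-integral into four pieces $(A)$--$(D)$ following the piecewise definition; this buys you brevity but defers one piece of bookkeeping to the Fubini step, and that is where there is a small unjustified jump.

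After Fubini, for fixed $z\in[\theta,\eta]$ and $a\in[b_\theta,\Sigma]$, the set of admissible $s$ is $\{s\in(b_\theta,\Sigma):\ s\geq za\}=[\max(za,b_\theta),\Sigma]$, not $[za,\Sigma]$: since $z<1$, one can have $za<b_\theta$ (e.g.\ $a$ near $b_\theta$ and $z$ near $\theta$). Your inner integral $\int_{za}^{\Sigma}\Phi(s/z-a)\,\mathrm{d}s$ therefore contains an extra piece $\int_{za}^{b_\theta}\Phi(s/z-a)\,\mathrm{d}s$ not present in the original domain. The formula you write is nonetheless correct, because on that extra range one has $s/z-a\leq b_\theta/z-a\leq b_\theta(\tfrac{1}{\theta}-1)=b$, and $\Phi=B\Psi$ vanishes on $[0,b)$ by \eqref{supportB}, so the spurious piece is zero. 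This is precisely the observation the paper isolates when it notes $\Psi(\tfrac{b_\theta}{z}-a)=1$; you should state it explicitly, otherwise the change of variables $\sigma=s/z-a$ with lower limit $0$ is not justified. Once this is added, the proof is complete and matches the paper's.
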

\begin{proof}
Integrating the equality $\rho_\Sigma f_\Sigma=T_\Sigma f_\Sigma$ over $(b_\theta,\Sigma)$, one has
\begin{align*}
\rho_\Sigma\int_{b_\theta}^\Sigma f_\Sigma(s)\diff s  = & \int_{b_\theta}^{\theta \Sigma}\int_\theta^\eta \int_{b_\theta}^\frac{s}{z} \Phi(\f{s}{z}-a)f(a)\diff a \dmz \diff s =:(A)\\
	& \quad + \int_{\theta \Sigma}^{\eta \Sigma}\int_\theta^{\f{s}{\Sigma}}\int_{b_\theta}^\Sigma \Phi(\f{s}{z}-a)f(a)\diff a \dmz \diff s =:(B)\\
	& \qquad + \int_{\theta \Sigma}^{\eta \Sigma}\int_{\f{s}{\Sigma}}^\eta\int_{b_\theta}^\frac{s}{z} \Phi(\f{s}{z}-a)f(a)\diff a \dmz \diff s =:(C)\\
	& \qquad \quad + \int_{\eta \Sigma}^\Sigma \int_\theta^\eta \int_{b_\theta}^\Sigma \Phi(\f{s}{z}-a)f(a)\diff a \dmz \diff s =:(D)
\end{align*}

\begin{align*}
(A) & = \int_\theta^\eta\int_{b_\theta}^\frac{b_\theta}{z}f_\Sigma(a)\int_{\frac{b_\theta}{z}}^{\theta \Sigma} \Phi(\f{s}{z}-a)\diff s \diff a \dmz + \int_\theta^\eta\int_{\frac{b_\theta}{z}}^{\frac{\theta\Sigma}{z}}f_\Sigma(a)\int_{za}^{\theta \Sigma} \Phi(\f{s}{z}-a)\diff s \diff a \dmz \\
	& = \int_\theta^\eta z\int_{b_\theta}^\frac{b_\theta}{z}f_\Sigma(a)\left[\Psi(\frac{b_\theta}{z}-a)-\Psi(\frac{\theta \Sigma}{z}-a)\right] \diff a \dmz + \int_\theta^\eta z\int_{\frac{b_\theta}{z}}^{\frac{\theta\Sigma}{z}}f_\Sigma(a)\left[1-\Psi(\frac{\theta\Sigma}{z}-a)\right] \diff a \dmz \\
(B) & = \int_\theta^\eta f_\Sigma(a)\int_{b_\theta}^\Sigma\int_{z\Sigma}^{\eta \Sigma} \Phi(\f{s}{z}-a)\diff s \diff a \dmz \\
	& = \int_\theta^\eta z\int_{b_\theta}^\Sigma f_\Sigma(a)\left[\Psi(\Sigma-a)-\Psi(\frac{\eta \Sigma}{z}-a)\right] \diff a \dmz
	\end{align*}
	\begin{align*}
(C) & = \int_\theta^\eta\int_{b_\theta}^\frac{\theta \Sigma}{z}f_\Sigma(a)\int_{\theta\Sigma}^{z \Sigma} \Phi(\f{s}{z}-a)\diff s \diff a \dmz + \int_\theta^\eta\int_{\frac{\theta\Sigma}{z}}^\Sigma f_\Sigma(a)\int_{za}^{z \Sigma} \Phi(\f{s}{z}-a)\diff s \diff a \dmz \\
	& = \int_\theta^\eta z\int_{b_\theta}^\frac{\theta \Sigma}{z}f_\Sigma(a)\left[\Psi(\frac{\theta\Sigma}{z}-a)-\Psi(\Sigma-a)\right] \diff a \dmz + \int_\theta^\eta z\int_{\frac{\theta\Sigma}{z}}^\Sigma f_\Sigma(a)\left[1-\Psi(\Sigma-a)\right] \diff a \dmz \\
(D) & = \int_\theta^\eta\int_{b_\theta}^\Sigma f_\Sigma(a)\int_{\eta\Sigma}^{\Sigma} \Phi(\f{s}{z}-a)\diff s \diff a \dmz \\
	& = \int_\theta^\eta z\int_{b_\theta}^\Sigma f_\Sigma(a)\left[\Psi(\frac{\eta\Sigma}{z}-a)-\Psi(\frac{\Sigma}{z}-a)\right]\diff a \dmz
\end{align*}
Then notice that for $a\in(b_\theta,\Sigma)$ and $z\in (\theta,\eta)$ one has\[\frac{b_\theta}{z}-a \leq \frac{b_\theta}{z}-b_\theta = b_\theta\left(\frac{1}{z}-1\right)\leq b_\theta\left(\frac{1}{\theta}-1\right)= b,\] so as in the computations leading to \eqref{bordgauche}, $\Psi(\frac{b_\theta}{z}-a)=1.$ Combining these different expressions, we deduce \beq \label{egaliterho}\rho_\Sigma\int_{b_\theta}^\Sigma f_\Sigma(s)\diff s = \int_{b_\theta}^\Sigma f_\Sigma(s)\diff s - \int_\theta^\eta z\int_{b_\theta}^\Sigma \Psi(\frac{\Sigma}{z}-a)f_\Sigma(a) \diff a \dmz. \eeq
Using the fact that the function $\Psi$ is nonincreasing, we obtain the wanted inequality.
\end{proof}
Now we show that up to a subsequence, $(f_\Sigma)_\Sigma$ converges to a fixed point of $T$ denoted by $f$. Thanks to \eqref{inegaliterho} and the properties of $\Psi,$ we can define \[\Sigma_0:=\inf\left\{\Sigma>\max(\frac{1}{1-\theta}b,1)\mathrm{\ such\ that\ }\rho_\Sigma > \frac{1}{2}\right\}.\]

\begin{lem} \label{extraction}
Under hypotheses \eqref{massconservation}, \eqref{probameasure}, \eqref{supportB} and \eqref{decayPsi}, the set $\{f_\Sigma, \Sigma \geq \Sigma_0, \|f_\Sigma\|_{\Leb^1(\R_+,(s^k+s^l)\diff s)}=1\}$ has a compact closure in $\Leb^1(\R_+,(s^k+s^l)\diff s)$, for any $l\leq0$ and  $k<k_0,\ k_0$ being the real number given in~\eqref{decayPsi}.
\end{lem}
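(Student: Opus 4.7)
The plan is to apply the Riesz--Fr\'echet--Kolmogorov criterion (Theorem~\ref{compactBrezis}) on $\Omega=\R_+$ with measure $(s^k+s^l)\diff s$. Boundedness is immediate from the normalization. Note that $s^k+s^l\geq 1$ on $\R_+$, so $\|f_\Sigma\|_{\Leb^1(\R_+)}\leq 1$ uniformly, and since $\supp f_\Sigma\subset[b_\theta,\Sigma]$, property~\eqref{massebord} near the left boundary is automatic. Only translation continuity~\eqref{translation} on compact subsets of $(b_\theta,\infty)$ and the absence of mass at~$+\infty$ remain to be checked.

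Translation continuity I would derive from the eigenrelation $f_\Sigma=\rho_\Sigma^{-1}Tf_\Sigma$ on $[b_\theta,\Sigma]$ (which holds by Lemma~\ref{lemmeTTSigma}) together with the convolution form $Tf(s)=\int_\theta^\eta(\Phi*f)(s/z)\,\diff\mu(z)$. For $[\alpha,\beta]\ssubset(b_\theta,\infty)$ and $\Sigma\geq\beta+|h|$, Fubini, the change of variable $u=s/z$, Young's inequality for convolution and mass conservation~\eqref{massconservation} yield
\[
\int_\alpha^\beta |f_\Sigma(s+h)-f_\Sigma(s)|\diff s \leq \frac{\|f_\Sigma\|_{\Leb^1}}{\rho_\Sigma}\sup_{z\in[\theta,\eta]}\|\tau_{h/z}\Phi-\Phi\|_{\Leb^1(\R_+)}.
\]
Since $\rho_\Sigma\geq 1/2$ and translation is $\Leb^1$-continuous, the right-hand side tends to~$0$ uniformly in $\Sigma$ as $h\to 0$. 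The weight being bounded on $[\alpha,\beta]$, the same holds in the weighted norm.

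The tail at infinity I would handle with a uniform higher-moment estimate. Fix $k'\in(k,k_0)$ and set $M_{k'}(g):=\int_0^\infty g(s)s^{k'}\diff s$ and $Z_{k'}:=\int_\theta^\eta z^{k'+1}\diff\mu(z)$. Mass conservation combined with $z\leq\eta<1$ gives $Z_{k'}\leq\eta^{k'}<1$. Testing the eigenrelation against $s^{k'}$ in the convolution form yields $\rho_\Sigma M_{k'}(f_\Sigma)\leq Z_{k'}\,M_{k'}(\Phi*f_\Sigma)$, and expanding $(v+w)^{k'}$ in a way that preserves coefficient~$1$ on the top-order term (the binomial formula for integer $k'$, an iterated splitting $(v+w)^{k'}=(v+w)^{\lfloor k'\rfloor}(v+w)^{\{k'\}}$ otherwise) produces
\[
(\rho_\Sigma-Z_{k'})\,M_{k'}(f_\Sigma)\leq C\sum_{0\leq j<k'}M_j(f_\Sigma),
\]
with a constant $C$ depending on $k'$ and on the moments $M_j(\Phi)$, all of which are finite for $j<k_0$ by~\eqref{decayPsi} (as in the proof of Lemma~\ref{stabiliteTL1poids}). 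Since $\rho_\Sigma\to 1>Z_{k'}$ the prefactor is bounded below for $\Sigma$ large (enlarging $\Sigma_0$ if necessary), so an induction on the lower moments (base case $M_0(f_\Sigma)\leq 1$) yields a uniform bound on $M_{k'}(f_\Sigma)$. Chebyshev's inequality then gives
\[
\int_R^\infty f_\Sigma(s)(s^k+s^l)\diff s\leq 2R^{k-k'}M_{k'}(f_\Sigma)\longrightarrow 0
\]
uniformly in $\Sigma$ as $R\to\infty$, completing~\eqref{massebord}.

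The main obstacle is this moment estimate. The crude inequality $(v+w)^{k'}\leq 2^{k'-1}(v^{k'}+w^{k'})$ places a prefactor $2^{k'-1}Z_{k'}$ in front of $M_{k'}(f_\Sigma)$, and absorbing it onto the left would require $2^{k'-1}\eta^{k'}<\rho_\Sigma$, which fails when $\eta\geq 1/2$. Keeping the coefficient of the top moment equal to~$1$ is therefore essential, and the binomial-type decomposition delivers exactly this. Once the moment bound is in hand, the three hypotheses of Theorem~\ref{compactBrezis} are satisfied and the lemma follows.
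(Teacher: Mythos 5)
Your translation-continuity argument is essentially the paper's, streamlined by writing $T$ in convolution form; the genuinely new element is your treatment of the tail at infinity, and it is a valid alternative. The paper instead integrates the eigenrelation over $[\beta,\Sigma]$, rearranges it into the identity
\[
\Bigl(1-\Psi\bigl((\tfrac{1}{\eta}-r)\beta\bigr)\Bigr)\int_\beta^{r\beta}f_\Sigma \;\leq\; \Psi\bigl((\tfrac{1}{\eta}-1)\beta\bigr),
\]
and sums over the geometric intervals $[r^j\beta,r^{j+1}\beta]$ using \eqref{decayPsi}. Your route instead propagates a $k'$-th moment, $k'\in(k,k_0)$: testing $\rho_\Sigma f_\Sigma\leq Tf_\Sigma$ against $s^{k'}$, exploiting $Z_{k'}=\int z^{k'+1}\diff\mu\leq\eta^{k'}<1$, and keeping the top binomial coefficient equal to $1$ so that the highest moment can be absorbed once $\rho_\Sigma>Z_{k'}$. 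Your observation that the crude subadditivity $(v+w)^{k'}\leq 2^{k'-1}(v^{k'}+w^{k'})$ destroys the absorption when $\eta$ is close to $1$ is exactly the right worry, and the binomial/fractional splitting fixes it. The moment route is arguably more transparent and it produces directly the uniform bound in $\Leb^1(\R_+,s^{k'}\diff s)$ that the eventual limit $f$ is claimed to satisfy. Both arguments do require enlarging $\Sigma_0$ so that $\rho_\Sigma$ dominates the relevant $Z_j$ (or stays above $\tfrac12$), which is harmless.

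There is, however, a genuine gap at the left boundary. You assert that \eqref{massebord} near $b_\theta$ is ``automatic'' because $\supp f_\Sigma\subset[b_\theta,\Sigma]$. This is true when $b>0$, since one may then pick $\omega=[\alpha,\beta]$ with $0<\alpha<b_\theta$ and get exactly zero mass to the left. But hypothesis \eqref{supportB} permits $b=0$, and then $b_\theta=0$ and $\supp f_\Sigma$ reaches the boundary of $\Omega=\R_+$: there is no room to place $\alpha$ below the support, the weight $s^l$ (with $l<0$) is unbounded there, and a priori the normalized $f_\Sigma$ could concentrate near $0$. A short estimate using the structure of $T$ is needed: in the paper it is \eqref{bordgauche}, which yields $\int_{b_\theta}^{\alpha}T_\Sigma f(s)\,s^l\diff s\leq\theta^l\bigl(1-\Psi((\alpha-b_\theta)/\theta)\bigr)\|f\|_{\Leb^1(s^l\diff s)}$ and works equally when $b_\theta=0$ because $\Psi(0^+)=1$; note that your moment trick cannot replace it, since $Z_l=\int z^{l+1}\diff\mu\geq 1$ for $l\leq 0$ and there is no absorption. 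Relatedly, your translation estimate is proved only for $\Sigma\geq\beta+|h|$; for the remaining $\Sigma\in[\Sigma_0,\beta+|h|)$ one additionally needs a uniformly small mass near the moving right endpoint $\Sigma$, which is the role of \eqref{borddroit}. Both omissions are repaired by the same style of computation you already use, but as written the proposal does not cover them.
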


\begin{proof}
Let $k\leq0$ and $k\in[0,k_0).$
Once again, we apply Theorem~\ref{compactBrezis} to show the desired result. First, we show that \eqref{translation} hold true with $\Omega=(b_\theta, \infty)$ and $\mathcal{F}=\{f_\Sigma, \|f_\Sigma\|_{\Leb^1(\R_+,(s^k+s^l)\diff s)}=1\}.$ Let $\omega$ be a compact subset of $(b_\theta,\infty)$ and $b_\theta<\alpha<\beta$ such that $\omega \subset [\alpha,\beta].$ We use the following inequality
\begin{align*}
\|\tau_h f_\Sigma- f_\Sigma\|_{\Leb^1(\omega,(s^k+s^l)ds)} & \leq 2\|\tau_h T_\Sigma f_\Sigma-T_\Sigma f_\Sigma\|_{\Leb^1(\omega,(s^k+s^l)ds)} \\
	&\leq 2\left(\beta^k+\alpha^l\right)\|\tau_h T_\Sigma f_\Sigma-T_\Sigma f_\Sigma\|_{\Leb^1([\alpha,\beta])} \\
	&\leq 2\left(\beta^k+\alpha^l\right)\|\tau_h T_\Sigma f_\Sigma-T_\Sigma f_\Sigma\|_{\Leb^1([\alpha,\Sigma])}.
\end{align*}
The last quantity is small when $h$ is small uniformly with respect to $\Sigma$ since in the proof of Lemma~\ref{compactTSigma}, the estimates do not depend on the value of $\Sigma.$ To prove that \eqref{massebord} holds true, we use the estimate~\eqref{bordgauche} twice to write
\begin{align*}
\|f_\Sigma\|_{\Leb^1((b_\theta,\alpha),(s^k+s^l)ds)}&=\frac{1}{\rho_\Sigma}\int_{b_\theta}^\alpha T_\Sigma f_\Sigma(s)(s^k+s^l)ds \\
		&\leq 2\int_{b_\theta}^\alpha T_\Sigma f_\Sigma(s)s^lds + 2\alpha^k\int_{b_\theta}^\alpha T_\Sigma f_\Sigma(s)ds \\
		&\leq 2\theta^l\left(1-\Psi(\frac{\alpha-b_\theta}{\theta})\right)+2\alpha^k\left(1-\Psi(\frac{\alpha-b_\theta}{\theta})\right) \\
		& \leq 2(\theta^l+\alpha^k)\left(1-\Psi(\frac{\alpha-b_\theta}{\theta})\right)
\end{align*}
which is again independant of $\Sigma.$ The estimate~\eqref{borddroit} though depends on $\Sigma,$  so we write for $\Sigma$ larger than~$\beta$
\[
\rho_\Sigma\int_\beta^\Sigma f_\Sigma(a)\diff a = \int_\beta^{\theta\Sigma}T_\Sigma f_\Sigma(a)\diff a + \int_{\theta\Sigma}^{\eta \Sigma}T_\Sigma f_\Sigma(a)\diff a + \int_{\eta \Sigma}^\Sigma T_\Sigma f_\Sigma(a)\diff a.
\]
For the first integral, we compute
\begin{align*}
& \int_\beta^{\theta\Sigma}T_\Sigma f_\Sigma(a)\diff a \\
&=\int_\theta^\eta z\int_{b_\theta}^\frac{\beta}{z}\left[\Psi(\frac{\beta}{z}-a)-\Psi(\frac{\theta \Sigma}{z}-a)\right] f_\Sigma(a)\diff a \dmz + \int_\theta^\eta z\int_{\frac{\beta}{z}}^{\frac{\theta\Sigma}{z}}\left[1-\Psi(\frac{\theta\Sigma}{z}-a)\right] f_\Sigma(a)\diff a \dmz.
\end{align*}
The two other integrals correspond to the integrals $(B),\ (C)$ and $(D)$ from the previous proof. Combining the integrals, we obtain
\begin{align*}
&\rho_\Sigma\int_\beta^\Sigma f_\Sigma(a)\diff a \\
& = \int_\theta^\eta z\int_{b\theta}^\frac{\beta}{z}\Psi(\frac{\beta}{z}-a)f_\Sigma(a)\diff a  \dmz + \int_\theta^\eta z\int_\frac{\beta}{z}^\Sigma f_\Sigma(a)\diff a \dmz - \int_\theta^\eta z\int_{b\theta}^\Sigma \Psi(\frac{\Sigma}{z}-a)f_\Sigma(a)\diff a \dmz.
\end{align*}
We deal with the last integral using \eqref{egaliterho} and obtain after interverting integrals
\begin{align*}
& \rho_\Sigma\int_\beta^\Sigma f_\Sigma(a)\diff a = \int_{b_\theta}^\frac{\beta}{\eta}f_\Sigma(a)\int_\theta^\eta z\Psi(\frac{\beta}{z}-a) \dmz \diff a + \int_\frac{\beta}{\eta}^\frac{\beta}{\theta}f_\Sigma(a)\int_\theta^\frac{\beta}{a} z\Psi(\frac{\beta}{z}-a) \dmz \diff a \\
& \quad + \int_\frac{\beta}{\eta}^\frac{\beta}{\theta}f_\Sigma(a)\int_\frac{\beta}{a}^\eta z \dmz \diff a + \int_\frac{\beta}{\theta}^\Sigma f_\Sigma(a)\diff a  + \rho_\Sigma \int_{b_\theta}^\Sigma f_\Sigma(a)\diff a - \int_{b_\theta}^\Sigma f_\Sigma(a)\diff a \\
\iff & \int_{b_\theta}^\frac{\beta}{\eta} f_\Sigma(a)\diff a = \int_{b_\theta}^\frac{\beta}{\eta}f_\Sigma(a)\int_\theta^\eta z\Psi(\frac{\beta}{z}-a) \dmz \diff a + \int_\frac{\beta}{\eta}^\frac{\beta}{\theta}f_\Sigma(a)\int_\theta^\frac{\beta}{a} z\Psi(\frac{\beta}{z}-a) \dmz \diff a \\
& \quad + \rho_\Sigma \int_{b_\theta}^\beta f_\Sigma(a)\diff a - \int_\frac{\beta}{\eta}^\frac{\beta}{\theta}f_\Sigma(a)\int_\theta^\frac{\beta}{a}z \dmz \diff a \\
\iff & \int_\beta^\frac{\beta}{\eta} f_\Sigma(a) \int_\theta^\eta z\left[1- \Psi(\frac{\beta}{z}-a) \right] \dmz \diff a + (1-\rho_\Sigma)\int_{b_\theta}^\beta f_\Sigma(a)\diff a \\
& \quad + \int_\frac{\beta}{\eta}^\frac{\beta}{\theta}f_\Sigma(a)\int_\theta^\frac{\beta}{a}z\left[1-\Psi(\frac{\beta}{z}-a)\right] \dmz \diff a = \int_{b_\theta}^\beta f_\Sigma(a)\int_\theta^\eta z\Psi(\frac{\beta}{z}-a) \dmz \diff a
\end{align*}
Since $0<\theta < \eta < 1,$ we can choose $\beta > \frac{\eta}{1-\eta}b \geq b_\theta.$ In that case, $\frac{1}{\eta}-\frac{b}{\beta} > 1,$ and we can pick $r\in ]1,\frac{1}{\eta}-\frac{b}{\beta}[$ such that $\left(\frac{1}{\eta}-r\right)\beta > b.$ Noticing that $1-\Psi(\frac{\beta}{z}-a)$ and $1-\rho_\Sigma$ are nonnegative, we obtain
\[\int_\beta^{r\beta} f_\Sigma(a)\int_\theta^\eta z\left[1- \Psi(\frac{\beta}{z}-a) \right] \dmz \diff a \leq \int_{b_\theta}^\beta f_\Sigma(a)\int_\theta^\eta z\Psi(\frac{\beta}{z}-a) \dmz \diff a,\] then \[\left(1-\Psi((\frac{1}{\eta}-r)\beta)\right)\int_\beta^{r\beta}f_\Sigma(a)\diff a \leq \Psi((\frac{1}{\eta}-1)\beta),\] and finally \[\int_\beta^{r\beta}f_\Sigma(a)(a^l+a^k)\diff a \leq (\beta^l+(r\beta)^k)\frac{\Psi((\frac{1}{\eta}-1)\beta)}{\left(1-\Psi((\frac{1}{\eta}-r)\beta)\right)} \leq 4(r\beta)^k\Psi((\frac{1}{\eta}-1)\beta)\]
for $\beta$ large enough. We use this estimate to get
\begin{align*}
\int_\beta^\infty f_\Sigma(s)(s^k+s^l) \diff s & = \sum_{j=0}^{\infty}\int_{r^j \beta}^{r^{j+1} \beta} f_S(s) (s^k+s^l) \diff s \\
& \leq 4r^k\sum_{j=0}^{\infty}\left(r^j \beta\right)^k\Psi((\frac{1}{\eta}-1) r^j \beta) \\
& \leq 4Cr^k\sum_{j=0}^{\infty}\left(r^j \beta\right)^k\big((\frac{1}{\eta}-1) r^j \beta\big)^{-k_0} \\
& \leq \frac{C_{k, k_0,\eta,r}}{\beta^{k_0-k}}
\end{align*}
due to hypothesis \eqref{decayPsi}, for $\beta$ large enough.

\end{proof}
We are now ready to prove the existence and uniqueness of a fixed point for the operator $T.$
\begin{proof}[Proof of Proposition \ref{fixedpointT}]
We have proved in Lemma~\ref{extraction} that the set $\{f_\Sigma,\ \|f_\Sigma\|_{\Leb^1(\R_+,(s^l+s^k)\diff s)}\}$ has a compact closure in $\Leb^1(\R_+,(s^k+s^l)\diff s).$
We deduce the existence of $f\in \Leb^1(\R_+,(s^k+s^l)\diff s)$
such that, up to a subsequence still denoted by $(f_\Sigma)_\Sigma$, $f_\Sigma \to f$ strongly as $\Sigma \to+ \infty$.
Now we prove that the function $f$ is a fixed point of the operator $T.$
We use the following inequality
\[
\|f-Tf\|_{\Leb^1(\R_+,(s^k+s^l)\diff s)} \leq \|f-f_\Sigma\|_{\Leb^1(\R_+,(s^k+s^l)\diff s)} + (1-\rho_\Sigma) + \|T_\Sigma f_\Sigma-Tf\|_{\Leb^1(\R_+,(s^k+s^l)\diff s)}.
\]
The first term of the right-hand side tends to zero as $\Sigma$ tends to $\infty$ by definition of $f,$ and the second one is smaller than $\Psi\big( (\frac{1}{\eta}-1)\Sigma\big)$ according to \eqref{inegaliterho}. For the last one, we write
\begin{align*}
 & \|T_\Sigma f_\Sigma-Tf\|_{\Leb^1(\R_+,(s^k+s^l)\diff s)} \\
 & \leq \underbrace{\|T_\Sigma f_\Sigma - Tf_\Sigma\|_{\Leb^1(\R_+,(s^k+s^l)\diff s)}}_{=0} + \|T(f-f_\Sigma)\|_{\Leb^1(\R_+,(s^k+s^l)\diff s)} \\
 & \leq \|f-f_\Sigma\|_{\Leb^1(\R_+,(s^k+s^l)\diff s)}
\end{align*}
due to Lemma~\ref{lemmeTTSigma} and to the continuity of $T,$ which is proved in Lemma~\ref{stabiliteTL1poids}.

To prove uniqueness of the fixed point, we consider $f_1$ another nonnegative fixed point of $T$ in $\Leb^1(\R_+)$ satisfying $\int_0^\infty f_1(s)\diff s=\int_0^\infty f(s)\diff s.$ Recalling the definition \eqref{eq:defTfrag} of the operator $T$, the functions $f$ and $f_1$ satisfy the integral convolution equation
\beq \label{equationconvolutionfrag}
f(s)=\int_\theta^\eta \Phi*f(\frac{s}{z})\dmz.
\eeq
Since $f,\ f_1$ and $\Phi$ are in $\Leb^1(\R_+)$, their Laplace transforms exist on $\R_+$ and are continuous decreasing functions. Taking the Laplace transform of $f-f_1$ and switching integrals thanks to Fubini's theorem, one has for every $y\geq0$
\beq \label{eqconvtransflaplace}
\mathcal{L}[f-f_1](y) = \int_\theta^\eta\mathcal{L}[f-f_1](zy)\mathcal{L}[\Phi](zy)z\dmz.
\eeq
The Laplace transform $\mathcal{L}[f-f_1]$ is continuous on $\R_+$ and vanishes at the origin
\[\mathcal{L}[f-f_1](0)=\int_0^\infty f(s)\diff s - \int_0^\infty f_1(s)\diff s = 0.
\]
We now define the functions
\[
\overline{\mathrm{L}}(y)=\sup_{x\in [0,y]}\mathcal{L}[f-f_1](x)\quad \mathrm{\ and\ }\quad \underline{\mathrm{L}}(y)=\inf_{x\in [0,y]}\mathcal{L}[f-f_1](x).
\]
By continuity in $0$ of $\mathcal{L}[f-f_1]$ and because $\mathcal{L}[f-f_1](0)=0,$ one has
\[\forall y\geq 0,\qquad \overline{\mathrm{L}}(y)\geq 0,\ \underline{\mathrm{L}}(y)\leq 0.
\]
From \eqref{eqconvtransflaplace}, we obtain the inequality
\[\mathcal{L}[f-f_1](y)\leq \overline{\mathrm{L}}(\eta y)\int_\theta^\eta\mathcal{L}[\Phi](zy)z\dmz \leq \overline{\mathrm{L}}(\eta y),
\]
since $\Phi$ is a probability measure.
$\overline{L}$ is a continuous increasing function, so for all $x\leq y,$ one has
\[\mathcal{L}[f-f_1](x)\leq \overline{\mathrm{L}}(\eta x) \leq \overline{\mathrm{L}}(\eta y),
\]
from what we deduce
\beq \label{inegLbarre}
\overline{\mathrm{L}}(y)\leq \overline{\mathrm{L}}(\eta y).
\eeq
Iterating \eqref{inegLbarre}, we obtain for all $y\geq 0$ and all positive integer $j$
\[\overline{\mathrm{L}}(y)\leq \overline{\mathrm{L}}(\eta^j y).
\]
Letting $j\to\infty$ in this inequality and using the continuity of the function $\overline{\mathrm{L}}$ we obtain $\overline{\mathrm{L}}(y)=0$ for all nonnegative $y.$ With the same method, we show that $\underline{\mathrm{L}}(y)=0$ for all nonnegative $y,$ and finaly $\mathcal{L}[f-f_1]$ is the null function.
By the injectivity of the Laplace transform (Lerch's theorem \cite{Lerch}), one has $f=f_1.$

It remains to prove that $\supp f=[b_\theta,\infty).$ With the same kind of proof than the one we used for $T_\Sigma$, we can prove that $T$ is irreducible on $\Leb^1(b_\theta,\infty)$, and since $f$ is not the zero function we get the result.
\end{proof}

\subsection{Proof of the main theorem}

We are now ready to prove the main theorem of the paper. 
\begin{proof}[Proof of Theorem~\ref{mainthm}]
Combining Lemma~\ref{equivfixpt} and Proposition~\ref{fixedpointT}, we construct a solution to \eqref{equationMfrag}--\eqref{normalisationMfrag} using \[M(a,s):=\f{\psi(a)}{(a+s)^2}f(s).\] It remains to prove its uniqueness in the appropriate space. This solution belongs to $\Leb^1(\R_+^2, (1+s^2)\diff a \diff s)$ thanks to the following calculation
\begin{align*}
&\int_{b_\theta}^\infty\int_0^\infty M(a,s)(1+s^2)\diff a \diff s \\
& = \int_{b_\theta}^\infty \int_0^\infty \f{1}{(a+s)^2}f(s) \Psi(a) \diff a \diff s + \int_{b_\theta}^\infty \int_0^\infty \f{s^2}{(a+s)^2}f(s) \Psi(a) \diff a \diff s\\
& \leq \int_{b_\theta}^\infty f(s) s^{-2}\int_0^\infty \Psi(a) \diff a \diff s +  \int_{b_\theta}^\infty f(s) \int_0^\infty \Psi(a) \diff a \diff s \\
& = \|f\|_{\Leb^1((b_\theta,\infty),(1+s^{-2})\diff s)} < \infty
\end{align*}
because $f\in \Leb^1((b_\theta,\infty),s^l\diff s)$ for all nonpositive number $l.$
To prove the uniqueness of the solution $M\in\Leb^1(\R_+^2, (1+s^2)\diff a \diff s)$, consider another solution $M_1\in \Leb^1(\R_+^2, (1+s^2)\diff a \diff s)$. 
Necessarily, as for $M,$ there exists a measurable function $f_1$ such that for almost all $s\geq a\geq0$
\[M_1(a,s)=\f{\Psi(a)}{(a+s)^2}f_1(s).\]
For $0 < \alpha < \beta < \infty,$ we can write
\[\int_0^\infty f_1(s)\diff s=\frac{1}{\beta-\alpha}\int_0^\infty\int_\alpha^\beta \frac{(a+s)^2}{\Psi(a)}M_1(a,s)\diff a \diff s \leq \frac{2(\beta^2+1)}{(\beta-\alpha)\Psi(\beta)}\|M_1\|_{\Leb^1(\R_+,(1+s^2)\diff s)},\]
and this ensures that $f_1 \in \Leb^1(\R_+).$
Additionally we easily check as in Lemma~\ref{equivfixpt} that $f_1$ has to be a fixed point of $T.$
Then the uniqueness result in Proposition~\ref{fixedpointT} ensures that $f_1=f,$ and so $M_1=M.$
The existence and uniqueness of a solution to the initial problem~\eqref{equationNfrag}--\eqref{normalisationNfrag} follows from the relation~\eqref{relNM}.
\end{proof}

\section{Entropy and long time behaviour}\label{sec:entropy}

Now that we have solved the eigenvalue problem, we would like to characterize the asymptotic behaviour of a solution $n$ of \eqref{equationnfrag}--\eqref{bordnaissancenfrag}, as in \cite{PerthameTransport}. The General Relative Entropy principle provides informations about the evolution of the distance in $\Leb^1$ norm between a solution $n(t,\cdot,\cdot)$ and $\e^tN.$ To establish such useful inequalities, we use the formalism introduced in \cite{Michel2004} and \cite{Michel2005}. Strictly speaking, to use this method, we should prove some properties on a time-dependent solution $n,$ in particular its existence and uniqueness for any reasonable initial condition. Let us here assume the existence of such a solution, which moreover satisfies the common estimate (see \cite{PerthameTransport})
\beq \label{dominationN}
|n(t,a,x)|\leq Ce^tN(a,x), \qquad t,x > 0.
\eeq
It is usually ensured by the hypothesis $|n^0(a,x)|\leq CN(a,x)$ and a maximum principle. For $H$ a function defined on all $\R,$ we define, for $n\in \Leb^1(\R_+^2)$
\[\mathcal{H}[n]=\int_{b_\theta}^\infty\int_0^{x-b_\theta}xN(a,x)H\left(\frac{n(a,x)}{N(a,x)}\right)\diff a \diff x\]
which satisfies the following entropy property.
\begin{prop}
If $n$ is a solution of \eqref{equationnfrag}--\eqref{bordnaissancenfrag} satisfying \eqref{dominationN}, then
\beq \label{disspentropy}
\frac{d}{dt}\mathcal{H}[n(t,\cdot,\cdot)\e^{-t}]=-\mathcal{D}[n(t,\cdot,\cdot)\e^{-t}],
\eeq
with
\[\mathcal{D}[n]=\int_{b_\theta}^\infty x^2N(0,x)\left[\int_\theta^\eta \int_0^{\f{x}{z}-b_\theta}H\left(\f{n(a,\frac{x}{z}) }{N(a,\frac{x}{z})}\right)\diff \nu_x(a,z)-H\left(\int_\theta^\eta\int_0^{\frac{x}{z}-b_\theta}\f{n(a,\frac{x}{z})}{N(a,\frac{x}{z})}\diff \nu_x(a,z)\right)\right]\diff x
\]
where $\diff \nu_x(a,z)=\frac{B(a)N(a,\frac{x}{z})}{N(0,x)z^2}\diff a \dmz$ is a probability measure. Furthermore if $H$ is convex, then $\mathcal{D}\geq 0.$
\end{prop}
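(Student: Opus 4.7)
The strategy is the standard General Relative Entropy computation, transposed to the birth-size variables where the transport simplifies. First I rescale: set $\tilde n = \e^{-t}n$, which satisfies the same PDE as $N$ together with a time derivative $\partial_t\tilde n$. Second, I move to the birth-size variable by setting $\tilde m(t,a,s) = \tilde n(t,a,a+s)$; the key simplification is $\partial_a(g\tilde n) + \partial_x(g\tilde n) = \partial_a((a+s)\tilde m)$, so $\tilde m$ satisfies $\partial_t \tilde m + \partial_a((a+s)\tilde m) + (1+(a+s)B(a))\tilde m = 0$. Introducing $u = \tilde m/M$ and invoking Equation~\eqref{equationMfrag} to cancel the zeroth-order terms in the equation for $uM$ gives the pure transport equation $\partial_t u + (a+s)\partial_a u = 0$, hence $\partial_t H(u) + (a+s)\partial_a H(u) = 0$ for any smooth $H$.

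Next I compute $\frac{d}{dt}\mathcal{H}[\tilde n]$. The change of variables $(a,x)\mapsto(a,s=x-a)$ is volume-preserving and maps the effective support of $N$, namely $\{x\geq b_\theta,\ 0\leq a\leq x-b_\theta\}$, onto $\{a\geq0,\ s\geq b_\theta\}$, so
\[\mathcal{H}[\tilde n] = \int_0^\infty \int_{b_\theta}^\infty (a+s) M(a,s) H(u(a,s)) \diff s \diff a.\]
Differentiating under the integral and using the transport equation for $H(u)$ yields $-\int\!\int (a+s)^2 M\,\partial_a H(u) \diff s \diff a$. I then integrate by parts in $a$: the term at $a=\infty$ vanishes under the decay inherited from \eqref{dominationN} and from Theorem~\ref{mainthm}; the term at $a=0$ contributes $s^2 M(0,s) H(u(0,s))$; and the bulk is handled using the identity $\partial_a((a+s)^2 M) = -(a+s)^2 B(a) M$, which follows from \eqref{equationMfrag}. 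Rewriting back in $(a,x)$ coordinates yields
\[\frac{d}{dt}\mathcal{H}[\tilde n] = \int_{b_\theta}^\infty x^2 N(0,x) H(u(0,x)) \diff x - \int_0^\infty \int_{a+b_\theta}^\infty x^2 B(a) N(a,x) H(u(a,x)) \diff x \diff a.\]

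To recognize these two pieces as $-\mathcal{D}[\tilde n]$, I compare the boundary conditions \eqref{bordnaissancenfrag} and \eqref{bordnaissanceNfrag} to obtain the key identity $u(0,x) = \int_\theta^\eta\int_0^{x/z-b_\theta} u(a,x/z)\diff\nu_x(a,z)$, which turns the first piece into $\int x^2 N(0,x) H\bigl(\int\!\int u \diff\nu_x\bigr)\diff x$. For the second piece, I change variables $y = x/z$ inside the integral defining the first term of $\mathcal{D}$; this produces a Jacobian factor $z$ and restricts the outer integration to $y\geq b_\theta/z$. The factor $z$ is exactly absorbed by the mass-conservation identity $\int_\theta^\eta z\diff\mu(z)=1$, and the apparent loss in the $y$-range is harmless thanks to \eqref{supportB}: on $y\in[b_\theta, b_\theta/\theta)$ one has $a\leq y-b_\theta < b$, so $B(a)=0$ and the integrand vanishes. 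Combining these identifications yields \eqref{disspentropy}, and the positivity $\mathcal{D}\geq 0$ for convex $H$ follows immediately from Jensen's inequality applied to the probability measure $\diff\nu_x$.

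The main technical difficulty lies in matching the bulk term to the first part of $\mathcal{D}$: the change of variable $y=x/z$ produces both an unwanted factor $z$ and an incomplete $y$-range, and it is only through the combined use of the mass-conservation normalization \eqref{massconservation} and the support condition \eqref{supportB} that the two expressions coincide. The integration by parts is otherwise routine but relies on integrability of $(a+s)^2 M$ and on decay of $u$ at $a=\infty$, which is what the domination assumption \eqref{dominationN} provides.
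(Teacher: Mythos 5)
Your proposal is correct and follows the same General Relative Entropy strategy as the paper: the same dissipation decomposition, the same use of the eigenvector identity to cancel the zeroth-order terms, the same recombination of the boundary condition via the probability measure $\nu_x$, and the same change of variables $y=x/z$ combined with the normalization \eqref{massconservation} and the support condition \eqref{supportB} to match the bulk term. The only notable difference is presentational: you carry out the entropy computation and the integration by parts in the birth-size coordinates $(a,s)$, where the transport is one-dimensional in $a$ and the only boundary term is at $a=0$, whereas the paper integrates the divergence identity directly over $\{x-a>b_\theta\}$ in $(a,x)$ coordinates and produces two extra boundary contributions on the diagonal $x=a+b_\theta$ which it then observes to cancel; your change of variables bypasses that cancellation but is otherwise the same argument.
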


Before proving this proposition, we make a remark about the conservative problem (\emph{i.e.} when only one daughter out of two is kept after division). In this case, the dominant eigenvalue is $0$ instead of $1$, and $xN(a,x)$ is an eigenvector associated with the eigenvalue $0,$ since the total mass is preserved. Then we obtain the equation
\beq \label{vpeqcons}\dda (x^2 N) + \ddx (x^2 N)= -x^2BN, \eeq
which might also be obtained multiplying \eqref{equationNfrag} by $x.$
\begin{proof}
Easy computations lead to \[ \ddt \f{n\e^{-t}}{N} + x\dda \f{n\e^{-t}}{N} + x\ddx \f{n\e^{-t}}{N}=0,\] where $N(a,x)>0$, \emph{i.e.} on the domain $\Omega:=\{x-a>b_\theta\}$. From this equality and \eqref{vpeqcons}, we deduce \beq \label{entropy:computations} \ddt \left(xNH \left(\f{n\e^{-t}}{N}\right) \right) + \dda \left(x^2NH \left(\f{n\e^{-t}}{N}\right) \right) + \ddx \left(x^2NH \left(\f{n\e^{-t}}{N}\right) \right)=-x^2BNH \left(\f{n\e^{-t}}{N}\right), \eeq and integrating \eqref{entropy:computations} over $\Omega,$ we obtain
\begin{align*}
& \f{d}{dt}\iint_{(b_\theta,\infty)\times (0,x-b_\theta)} x N H\left(\f{n\e^{-t}}{N}\right) \\
& = \int_{b_\theta}^\infty x^2 N(0,x)H\left(\f{n(t,0,x)\e^{-t}}{N(0,x)}\right)\diff x - \int_{b_\theta}^\infty x^2 N(x-b_\theta,x)H\left(\f{n(t,x-b_\theta,x)e^{-t}}{N(x-b_\theta,x)}\right)\diff x \\
& \quad + \int_0^\infty (a+b_\theta)^2 N(a,a+b_\theta)H\left(\f{n(t,a,a+b_\theta)\e^{-t}}{N(a,a+b_\theta)}\right)\diff a - \int_{b_\theta}^\infty \int_0^{x-b_\theta} x^2 B N H\left(\f{n\e^{-t}}{N}\right) \diff a \diff x \\
& = \int_{b_\theta}^\infty x^2N\left(0,x\right)H\left(\f{\e^{-t}}{N(0,x)}\int_\theta^\eta\int_0^{\frac{x}{z}-b_\theta}B(a)n(t,a,\frac{x}{z})\diff a \frac{\dmz}{z^2}\right)\diff x \\
& \quad - 2\int_\theta^\eta\int_{b_\theta}^\infty \int_0^{x-b_\theta} x^2 B N H\left(\f{n\e^{-t}}{N}\right) \diff a \diff x z\dmz\\
& = \int_{b_\theta}^\infty x^2N\left(0,x\right)H\left(\int_\theta^\eta\int_0^{\frac{x}{z}-b_\theta}\f{n(t,a,\frac{x}{z})\e^{-t}}{N(a,\frac{x}{z})}\diff \nu_x(a,z)\right)\diff x  \\
& \quad - 2\int_\theta^\eta\int_{zb_\theta}^{b_\theta} \int_0^{\f{x}{z}-b_\theta} x^2 B(a) N(a,\frac{x}{z}) H\left(\f{n(t,a,\frac{x}{z}) \e^{-t}}{N(a,\frac{x}{z})}\right) \diff a \diff x \f{\dmz}{z^2} \\
& \qquad - 2\int_\theta^\eta\int_{b_\theta}^\infty \int_0^{\f{x}{z}-b_\theta} x^2 B(a) N(a,\frac{x}{z}) H\left(\f{n(t,a,\frac{x}{z}) \e^{-t}}{N(a,\frac{x}{z})}\right) \diff a \diff x \f{\dmz}{z^2} \\
&=\int_{b_\theta}^\infty x^2N(0,x)\left[ H\left(\int_\theta^\eta\int_0^{\frac{x}{z}-b_\theta}\f{n(t,a,\frac{x}{z})\e^{-t}}{N(a,\frac{x}{z})}\diff \nu_x(a,z)\right)-\int_\theta^\eta \int_0^{\f{x}{z}-b_\theta}H\left(\f{n(t,a,\frac{x}{z}) \e^{-t}}{N(a,\frac{x}{z})}\right)\diff \nu_x(a,z)\right]\diff x,
\end{align*}
since for $x\in [zb_\theta,b_\theta]$ and $z\in [\theta,\eta], \f{x}{z}-b_\theta\leq b,$ and we conclude using Jensen's inequality.

\end{proof}
Appropriate choices of the function $H$ in \eqref{disspentropy} lead to interesting results. With $H(x)=x$, we recover the conservation law \eqref{conservationlaw}. Then taking $H(x)=|1-x|$, we obtain the decay of $\|N-ne^{-t}\|_{\Leb^1(\R_+, x\diff x \diff a)}$ as $t$ tends to infinity. In the case where the fragmentation kernel $\mu$ has a density with respect to the Lebesgue measure on $[0,1],$ we expect that this quantity will vanish, as in \cite{Michel2005,PerthameTransport}. In contrast, in the case of the equal mitosis, there is not hope for this distance to vanish. Indeed, one has an infinite number of eigentriplets $(\lb_j,N_j,\phi_j)$ with $j\in \Z$ defined by
\[
\lb_j=1+\f{2ij\pi}{\log 2}, \qquad N_j(a,x)=x^{1-\lb_j}N(a,x), \qquad \phi_j(a,x)=x^{\lb_j},
\]
so we expect a behavior as in \cite{BDG}, \emph{i.e.} the convergence of $n(t,a,x)\e^{-t}$ to the periodic solution \[\sum_{j\in \Z} \langle n^0,\phi_j\rangle\,\e^{\f{2ij\pi t}{\log 2}}N_j(a,x),\] where $\langle n,\phi\rangle=\int\int n(a,x)\phi(a,x)\diff a\diff x.$



%




\section{Discussion and perspectives}\label{sec:conclusion}

We have proved the existence and uniqueness of a solution of the eigenproblem \eqref{equationNfrag}--\eqref{normalisationNfrag} in the special yet biologically relevant case of linear growth rate with a self-similar fragmentation kernel. Hypotheses on both this kernel and the division rate are fairly general.

As possible future work we can imagine to extend the result to general growth rates.
In this case the Perron eigenvalue is not explicit and it has to be determined in the same time as the eigenfunction, as in~\cite{Webb85,MetzDiekmann,Doumic2007}.
If we denote by $\lambda$ the eigenvalue, the equivalent of Equation \eqref{equationconvolutionfrag} is
\[P_\lb(s)=\int_0^1 e^{-\lb\int_s^{\frac sz}\frac{\diff u}{g(u)}}(\Phi*P_\lb) (\frac{s}{z})\frac{\dmz}{z}\]
with  $P_\lb(y)=e^{\lb \int_1^s\frac{\diff u}{g(u)}}M(0,s)$ and the equivalent of the solution given in \eqref{formethm} is \[N : (a,x) \mapsto \frac{\Psi(a)}{g(x)}e^{-\lambda\int_0^x \frac{\diff \alpha}{g(\alpha)}}P_\lambda(x-a).\]
Additionally for nonlinear growth rates, the function $(a,x)\mapsto x$ does not provide a conservation law as in~\eqref{conservationlaw}, and it has to be replaced by a solution to the dual Perron eigenproblem.
Such a dual eigenfunction appears in the definition of the General Relative Entropy \cite{Michel2004,Michel2005},
and for proving its existence one could follow the method in~\cite{PerthameRyzhik,DG} for the size-structured model.
Another possible generalization of the growth rate is adding variability, in the spirit of \cite{Rotenberg,MPR,Olivier2017}.
One might also consider a more general fragmentation kernel than in the case of self-similar fragmentation, or/and with a support which is not a compact subset of $(0,1).$

The other natural continuation of the present work is the proof of the well-posedness and the long-time behavior of the evolution equation, as in~\cite{Webb85,MetzDiekmann}.
To do so one can take advantage of the General Relative Entropy as in~\cite{Michel2005,CCM,BDG} or use general spectral methods~\cite{Webb87,MischlerScher}.



\

\section*{Acknowledgments}
The authors are very grateful to Marie Doumic for having suggested them the problem treated in this paper, and for the many fruitful discussions.

H.M. has been supported by the ERC Starting Grant SKIPPER$^{AD}$ (number 306321).
P.G. has been supported by the ANR project KIBORD, ANR-13-BS01-0004, funded by the French Ministry of Research.


\end{document}